\def\bfit{\bfseries\itshape}
\def\ZM{{\mathbb{Z}}}
\def\AC{{\mathcal{A}}}
\def\DC{{\mathcal{D}}}
\def\PC{{\mathcal{P}}}
\def\RC{{\mathcal{R}}}
\def\VC{{\mathcal{V}}}
\def\Arm{{\mathrm{A}}}  
\def\D{\Delta}
\def\G{\Gamma}
\def\d{\delta}
\def\r{\rho}
\def\s{\sigma}
\def\t{\tau}
\newcommand{\fS}{{\mathfrak{S}}}
\newcommand{\bH}{{\mathcal{H}}}
\newcommand{\cI}{{\mathcal{I}}}
\newcommand{\cT}{{\mathfrak{T}}}
\newcommand{\Ind}{{\mathrm{Ind}}}
\newcommand{\prI}{{\mathrm{pr}}}
\newcommand{\longiso}{\stackrel{\sim}{\longrightarrow}}
\def\to{\rightarrow}
\def\longto{\longrightarrow}
\def\eqna{\begin{eqnarray*}}
\def\endeqna{\end{eqnarray*}}
\def\itemth#1{\item[${\mathrm{(#1)}}$]}
\renewcommand{\le}{\leqslant}
\renewcommand{\ge}{\geqslant}
\renewcommand{\leq}{\leqslant}
\renewcommand{\geq}{\geqslant}
\def\equat{\refstepcounter{thm}$$~}
\def\endequat{\leqno{{(\thesection.\arabic{thm})}}~$$}
\spnewtheorem{thm}{Theorem}[section]{\bfseries}{\itshape}
\spnewtheorem{lem}[thm]{Lemma}{\bfseries}{\itshape}
\spnewtheorem{conj}[thm]{Conjecture}{\bfseries}{\itshape}
\spnewtheorem{cor}[thm]{Corollary}{\bfseries}{\itshape}
\spnewtheorem{prop}[thm]{Proposition}{\bfseries}{\itshape}
\spnewtheorem{exmp}[thm]{Example}{\bfseries}{}
\spnewtheorem{defn}[thm]{Definition}{\bfseries}{}
\spnewtheorem{abschnitt}[thm]{}{\bfseries}{}
\spnewtheorem{rem}[thm]{Remark}{\bfseries}{}
\begin{document}
\motto{To David Vogan on the occasion of his $60$th birthday}
\title*{Hecke algebras with unequal parameters and Vogan's left cell
invariants}

\author{C\'edric Bonnaf\'e and Meinolf Geck}
\institute{C\'edric Bonnaf\'e \at 
Institut de Math\'ematiques et de Mod\'elisation de Montpellier 
(CNRS: UMR 5149), Universit\'e Montpellier 2,
Case Courrier 051,
Place Eug\`ene Bataillon,
34095 Montpellier Cedex,
France, \\
\email{cedric.bonnafe@univ-montp2.fr} \and
Meinolf Geck \at 
Fachbereich Mathematik, IAZ - Lehrstuhl f\"ur Algebra, 
Universit\"at Stuttgart, Pfaf\-fen\-wald\-ring 57, 70569 Stuttgart, 
Germany,\\
\email{meinolf.geck@mathematik.uni-stuttgart.de}}

\maketitle

\abstract{In 1979, Vogan introduced a generalised $\tau$-invariant for 
characterising primitive ideals in enveloping algebras. Via a known
dictionary this translates to an invariant of left cells in the sense
of Kazhdan and Lusztig. Although it is not a complete invariant, it is 
extremely useful in describing left cells. Here, we propose a general 
framework for defining such invariants which also applies to Hecke
algebras with unequal parameters.}

\keywords{Coxeter groups, Hecke algebras, Kazhdan--Lusztig cells, \\ 
MSC 2010: Primary 20C08, Secondary 20F55}

\section{Introduction}
Let $W$ be a finite Weyl group. Using the corresponding generic
Iwahori--Hecke algebra and the ``new'' basis of this algebra introduced
by Kazhdan and Lusztig \cite{KL}, we obtain partitions of $W$ into left,
right and two-sided cells. Analogous notions originally arose in the
theory of primitive ideals in enveloping algebras; see Joseph \cite{jos}.
This is one of the sources for the interest in knowing the cell partitions
of $W$; there are also deep connections \cite{LuICM} with representations of 
reductive groups, singularities of Schubert cells and the geometry of
unipotent classes. Vogan \cite{voga}, \cite{voga1} introduced invariants 
of left cells which are computable in terms of certain combinatorially
defined operators $T_{\alpha\beta}$, $S_{\alpha\beta}$ where $\alpha,\beta$ 
are adjacent simple roots of $W$. In the case where $W$ is the symmetric 
group $\fS_n$, these invariants completely characterise the left cells; 
see \cite[\S 5]{KL}, \cite[\S 6]{voga}. Although Vogan's invariants are not 
complete invariants in general, they have turned out to be extremely
useful in describing left cells.

Now, the Kazhdan--Lusztig cell partitions are not only defined and
interesting for finite Weyl groups, but also for affine Weyl groups
and Coxeter groups in general; see, e.g., Lusztig \cite{Lu1},
\cite{Lusztig03}. Furthermore, the original theory was extended by
Lusztig \cite{Lusztig83} to allow the possibility of attaching weights to
the simple reflections. The original setting then corresponds to the case
where all weights are equal to $1$; we will refer to this case as the
``equal parameter case''. Our aim here is to propose analogues of Vogan's 
invariants which work in general, i.e., for arbitrary Coxeter groups and 
arbitrary (positive) weights.

In Sections~\ref{sec1} and \ref{sec:parabolic} we briefly recall the 
basic set-up concerning Iwahori--Hecke algebras, cells in the sense 
of Kazhdan and Lusztig, and the concept of induction of cells. In
Section~\ref{sec3} we introduce the notion of {\it left cellular maps};
a fundamental example is given by the Kazhdan--Lusztig $*$-operations.
In Section~\ref{seceq}, we discuss the equal parameter case and Vogan's 
original definition of a generalised $\tau$-invariant. As this definition
relied on the theory of primitive ideals, it only applies to finite Weyl
groups. In Theorem~\ref{vogan1}, we show that this works for arbitrary
Coxeter groups satisfying a certain boundedness condition. (A similar
result has also been proved by Shi \cite[4.2]{shi}, but he uses a 
definition slightly different from Vogan's; our argument seems to be more 
direct.) In Sections~\ref{sec2} and~\ref{sec6}, we develop an abstract 
setting for defining such invariants; this essentially relies on the 
concept of induction of cells and is inspired by Lusztig's method of 
{\it strings} \cite[\S 10]{Lu1}. As a bi-product of our approach, we 
obtain that the $*$-operations also work in the unequal parameter case. 
We conclude by discussing examples and stating open problems.

\medskip
\noindent{\bf Remark.}  In~\cite[Cor.~6.2]{bonnafe b}, the first author 
implicitly assumed that the results on the Kazhdan--Lusztig $*$-operations 
\cite[\S 4]{KL} also hold in the unequal parameter context~---~which was
a serious mistake at the time. Corollary~\ref{star1} below justifies 
{\it a posteriori} this assumption.

\medskip
\noindent{\bf Notation.}  
We fix a Coxeter system $(W,S)$ and we denote by $\ell : 
W \to \ZM_{\ge 0}$ the associated length function. We also fix a totally 
ordered abelian group $\AC$. We use an exponential notation for the group 
algebra $A=\ZM[\AC]$: 
\[A=\mathop{\oplus}_{a \in \AC} \ZM v^a \qquad \mbox{where}\qquad  
v^a v^{a'}= v^{a+a'} \mbox{ for all $a$, $a' \in \AC$}.\] 
We write $\AC_{\leq 0}:=\{\alpha\in \AC\mid \alpha \leq 0\}$ and
$A_{\leq 0}:=\mathop{\oplus}_{a \in \AC_{\leq 0}} \ZM v^a$; the 
symbols $\AC_{\geq 0}$, $A_{\geq 0}$ etc.\ have analogous meanings. We 
denote by $\overline{\hphantom{x}\vphantom{a}} : A \to A$ the involutive
automorphism such that $\overline{v^a}=v^{-a}$ for all $a\in\AC$. 

\section{Weight functions and cells} \label{sec1}

Let $p : W \to \AC$, $w \mapsto p_w$, be a {\it weight function} in the sense 
of Lusztig~\cite{Lusztig03}, that is, we have $p_s=p_t$ whenever $s,t\in S$ 
are conjugate in $W$, and $p_w=p_{s_1}+\cdots +p_{s_k}$ if $w=s_1\cdots s_k$
(with $s_i \in S$) is a reduced expression for $w\in W$.  The original setup
in \cite{KL} corresponds to the case where $\AC=\ZM$ and $p_s=1$ for all 
$s \in S$; this will be called the ``equal parameter case''.  We shall 
assume throughout that $p_s>0$ for all $s\in S$. (There are standard 
techniques for reducing the general case to this case~\cite[\S 2]{bo3}.)

Let $\bH=\bH_A(W,S,p)$ be the corresponding generic Iwahori--Hecke algebra.
This algebra is free over 
$A$ with basis $(T_w)_{w \in W}$, and the multiplication is given by the rule
\[ T_sT_w=\left\{\begin{array}{cl} T_{sw} & \quad \mbox{if $sw>w$},\\
T_{sw}+(v^{p_s}-v^{-p_s})T_w & \quad \mbox{if $sw<w$},
\end{array}\right.\]
where $s\in S$ and $w\in W$; here, $\leq$ denotes the Bruhat--Chevalley
order on $W$. 

Let $(C_w')_{w\in W}$ be the ``new'' basis of $\bH$ introduced in 
\cite[(1.1.c)]{KL}, \cite[\S 2]{Lusztig83}. (These basis elements are
denoted $c_w$ in \cite{Lusztig03}.) For any $x,y\in W$, we write 
\[ C_x'\,C_y'=\sum_{z\in W} h_{x,y,z} \, C_z' \qquad \mbox{where
$h_{x,y,z} \in A$ for all $x,y,z\in W$}.\]
We have the following more explicit formula for $s\in S$, $y\in W$
(see \cite[\S 6]{Lusztig83}, \cite[Chap.~6]{Lusztig03}):
\[C_s'\,C_y' = \left\{\begin{array}{ll} \displaystyle{(v^{p_s}+
v^{-p_s})\,C_y'} &\quad \mbox{if $sy<y$},\\ 
\displaystyle{C_{sy}'+ \sum_{z\in W\,:\,sz<z<y} 
M_{z,y}^s C_z'} &\quad \mbox{if $sy>y$},
\end{array}\right.\] 
where $C_s'=T_s+v^{-p_s}T_1$ and $M_{z,y}^s=\overline{M}_{z,y}^s \in A$ 
is determined as in \cite[\S 3]{Lusztig83}. 

As in \cite[\S 8]{Lusztig03}, we write $x \leftarrow_{L} y$ if there
exists some $s\in S$ such that $h_{s,y,x}\neq 0$, that is, $C_x'$ occurs 
with a non-zero coefficient in the expression of $C_s'\, C_y'$ in the
$C'$-basis. The Kazhdan--Lusztig left pre-order $\leq_{L}$ is the 
transitive closure of $\leftarrow_{L}$. The equivalence relation associated 
with $\leq_{L}$ will be denoted by $\sim_{L}$ and the corresponding 
equivalence classes are called the {\em left cells} of~$W$. Note that
$\bH C_w\subseteq \sum_{y} AC_y$ where the sum runs over all
$y\in W$ with $y\leq_{L} w$.

Similarly, we can define a pre-order $\leq_{R}$ by considering
multiplication by $C_s'$ on the right in the defining relation. The
equivalence relation associated with $\leq_{R}$ will be denoted by
$\sim_{R}$ and the corresponding equivalence classes are called the
{\em right cells} of $W$.  We have
\equat\label{eq:left-right}
x \leq_{R} y \quad \Longleftrightarrow \quad x^{-1} \leq_{L} y^{-1};
\endequat
see \cite[5.6, 8.1]{Lusztig03}. Finally, we define a pre-order $\leq_{LR}$ 
by the condition that $x\leq_{LR} y$ if there exists a sequence $x=x_0,x_1,
\ldots, x_k=y$ such that, for each $i \in \{1,\ldots,k\}$, we have 
$x_{i-1} \leq_{L} x_i$ or $x_{i-1}\leq_{R} x_i$. The equivalence
relation associated with $\leq_{LR}$ will be denoted by $\sim_{LR}$ 
and the corresponding equivalence classes are called the {\em two-sided 
cells} of $W$. 

\begin{defn} \label{defclosed} A (non-empty) subset $\Gamma$ of $W$ is 
called {\bfit left-closed} if, for any $x,y\in\Gamma$,
we have $\{z\in W\mid x\leq_L z\leq_L y\}\subseteq \Gamma$. 
\end{defn}

Note that any such subset is a union of left cells. A left cell itself 
is clearly left-closed with respect to $\leq_L$. It immediately follows
from these definitions that, given any left-closed subset $\Gamma
\subseteq W$, the $A$-submodules 
\eqna
{\cI}_{\Gamma} &=&\langle C_w'\mid w\leq_{L} 
z\mbox{ for some $z \in\Gamma$}\rangle_A,\\
\hat{\cI}_{\Gamma} &=&\langle C_w'\mid w \not\in \Gamma, w\leq_{L} z
\mbox{ for some $z \in\Gamma$}\rangle_A.
\endeqna
are left ideals in $\bH$. Hence we obtain an $\bH$-module $[\Gamma]_A:=
{\cI}_{\Gamma}/ \hat{\cI}_{\Gamma}$, which is free over $A$ with basis
given by $(e_x)_{x\in \Gamma}$, where $e_x$ denotes the residue class 
of $C_x'$ in $[\Gamma]_A$. The action of $C_w'$ ($w \in W$) is
given by the formula
\[ C_w'\cdot e_x=\sum_{y \in \Gamma} h_{w,x,y}\, e_y.\]

\section{Cells and parabolic subgroups}\label{sec:parabolic}

A key tool in this work will be the process of {\it induction of cells}.  
Let $I\subseteq S$ and consider the parabolic subgroup $W_I\subseteq W$ 
generated by $I$. Then
\[ X_I:=\{w\in W\mid ws>w \mbox{ for all $s\in I$}\}\]
is the set of distinguished left coset representatives of $W_I$ in $W$.
The map $X_I \times W_I \rightarrow W$, $(x,u) \mapsto xu$, is a bijection
and we have $\ell(xu)=\ell(x)+\ell(u)$ for all $x\in X_I$ and $u\in W_I$; see
\cite[\S 2.1]{gepf}. Thus, given $w\in W$, we can write uniquely $w=xu$
where $x\in X_I$ and $u\in W_I$. In this case, we denote $\prI_I(w):=u$.
Let $\leq_{L,I}$ and $\sim_{L,I}$ be respectively the pre-order and 
equivalence relations for $W_I$ defined similarly as $\leq_L$ and $\sim_L$ 
are defined in $W$.  

\begin{thm} \label{cellind} Let $I\subseteq S$. If $x$, $y \in W$ are such 
that $x \leq_L y$ (resp.\ $x \sim_L y$), then $\prI_I(x) \leq_{L,I} 
\prI_I(y)$ (resp.\ $\prI_I(x) \sim_{L,I} \prI_I(y)$). In particular, 
if $\Gamma$ is a left cell of $W_I$, then
$X_I\Gamma$ is a union of left cells of $W$.
\end{thm}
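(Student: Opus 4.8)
The plan is to reduce everything to the case $I = S \setminus \{s\}$ for a single $s$, and then to analyze directly how multiplication by $C_t'$ (for $t \in S$) interacts with the decomposition $w = x\,\prI_I(w)$ with $x \in X_I$. The key point is a compatibility between the Kazhdan--Lusztig basis of $\bH$ and that of $\bH_I := \bH_{A}(W_I, I, p|_{W_I})$: if we write, for $u \in W_I$, the element $C_u'$ computed inside $\bH_I$, then one expects a relation of the form $C_{xu}' = T_x\, C_u' + (\text{lower terms})$, or more usefully that the $A$-span of $\{C_w' \mid \prI_I(w) \leq_{L,I} u\}$ is stable under left multiplication by $\bH$. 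Granting such a statement, the module $[\Gamma]_A$ attached (as in Section~\ref{sec1}) to a left-closed subset $\Gamma$ of $W_I$ pulls back along $\prI_I$ to give an $\bH$-module structure on the span of $\{C_w' \mid \prI_I(w) \in \Gamma\}$, and tracking which $C_w'$ appear forces $\prI_I(x) \leq_{L,I} \prI_I(y)$ whenever $x \leq_L y$.

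Concretely, the first step is to establish the multiplication formula: for $s \in S$ and $w = xu \in W$ with $x \in X_I$, $u \in W_I$, express $C_s'\, C_w'$ in terms of elements $C_{x'u'}'$ with $u' \leq_{L,I} u$. There are two cases. If $s$ does not "interact with $u$" in the sense that left multiplication by $s$ stays within the coset $xW_I$ with the same distinguished representative behaviour, one gets contributions with $u' = u$. If $sx < x$ or the reduced expression interacts with $W_I$, one must use the explicit formula for $C_s'\, C_y'$ recalled in Section~\ref{sec1} together with the factorization $\ell(xu) = \ell(x) + \ell(u)$ and properties of $X_I$. The cleanest route is probably to argue by induction on $\ell(x)$: write $x = s'x'$ with $s'x' < x$, $x' \in X_I$, use $C_{s'}' C_{x'u}' = C_{x'u}' + \sum M_{z, x'u}^{s'} C_z'$ (all $z$ with $\prI_I(z) \leq_{L,I} u$ by induction, since $z <_L x'u$ in the relevant range and the $M$-coefficients only involve smaller elements), and peel off the $C_{x'u}'$ term. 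This shows $\bH \cdot C_w' \subseteq \langle C_{w'}' \mid \prI_I(w') \leq_{L,I} \prI_I(w)\rangle_A$.

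From this containment the first assertion of the theorem is immediate: $x \leftarrow_L y$ means $C_x'$ occurs in some $C_s'\, C_y'$, hence $\prI_I(x) \leq_{L,I} \prI_I(y)$ by the containment just proved; taking transitive closure gives $x \leq_L y \Rightarrow \prI_I(x) \leq_{L,I} \prI_I(y)$, and applying this in both directions gives the statement for $\sim_L$. For the "in particular" clause, let $\Gamma$ be a left cell of $W_I$ and suppose $x \in X_I\Gamma$, $x \leq_L y$: then $\prI_I(x) \in \Gamma$ and $\prI_I(x) \leq_{L,I} \prI_I(y)$; to conclude $y \in X_I\Gamma$ we need the reverse inequality $\prI_I(y) \leq_{L,I} \prI_I(x)$, which does \emph{not} follow from the above and is exactly where the real work lies.

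The main obstacle, therefore, is producing elements $y$ that "go back up": to show $X_I\Gamma$ is a union of left cells one must show that $\prI_I$ restricted to the relevant part of $W$ reflects the pre-order well enough, or, more practically, that for $u \in W_I$ and $x \in X_I$ one has $xu \leq_L xu'$ in $W$ whenever $u \leq_{L,I} u'$ in $W_I$ with $u, u'$ in the same left cell $\Gamma$. I would handle this by showing directly that left multiplication by $\bH_I$ (viewed inside $\bH$ via $T_t \mapsto T_t$, $t \in I$) on the span of $\{C_{xu}' \mid u \in W_I\}$ for \emph{fixed} $x \in X_I$ realizes a copy of the $\bH_I$-module built from the $C'$-basis of $W_I$ — i.e.\ $C_t' \cdot C_{xu}' = \sum_{u'} h_{t,u,u'}^{I}\, C_{xu'}' + (\text{terms } C_{x''u''}' \text{ with } \ell(x'') > \ell(x))$ for $t \in I$, where $h^I$ are the structure constants of $\bH_I$. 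Given that, if $u \sim_{L,I} u'$ then $C_{xu'}'$ appears in a product $C_t' \cdots C_t' \cdot C_{xu}'$, so $xu' \leq_L xu$; combined with the first part this yields $xu \sim_L xu'$, hence $X_I\Gamma$ is a union of left cells. Establishing this module isomorphism — essentially that $\bH \cong \bigoplus_{x \in X_I} T_x \bH_I$ as a right $\bH_I$-module is compatible with the $C'$-bases modulo a filtration by $\ell(x)$ — is the technical heart; it should follow from the same inductive multiplication formula as above, combined with the triangularity of $(C_w')$ with respect to $(T_w)$ and the fact that the $M^s_{z,y}$ involve only strictly shorter elements.
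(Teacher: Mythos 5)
First, note that the paper does not prove Theorem~\ref{cellind} at all: it is quoted with references to Barbasch--Vogan \cite{BV2} (finite Weyl groups, equal parameters, via primitive ideals) and to \cite{myind} for the general case. So the relevant comparison is with the proof in \cite{myind}. Your identification of the key lemma is correct and matches that reference: the statement to prove is that, for $u\in W_I$, the $A$-span of $\{C_w'\mid \prI_I(w)\leq_{L,I}u\}$ is a left ideal of $\bH$, from which $x\leq_L y\Rightarrow \prI_I(x)\leq_{L,I}\prI_I(y)$ is immediate. However, your proposed proof of this lemma is circular at exactly the hard point. In the induction on $\ell(x)$ you must control the correction terms $\sum_z M^{s'}_{z,x'u}C_z'$, and you justify $\prI_I(z)\leq_{L,I}u$ ``by induction, since $z<_L x'u$'' --- but ``$z\leq_L x'u$ implies $\prI_I(z)\leq_{L,I}\prI_I(x'u)$'' \emph{is} the theorem being proved, and the elements $z$ occurring there (all $z<x'u$ in Bruhat order with $s'z<z$) are not controlled by an induction on the length of the $X_I$-component: their $W_I$-parts can be arbitrary. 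Taming these terms is the actual content of \cite{myind}, where it is done by constructing a self-dual ``relative'' Kazhdan--Lusztig basis of the induced module $\Ind_I^S(M)$ for $M$ a left ideal of $\bH_I$, and then identifying it with the genuine $C'$-basis of $\bH$ under the multiplication map. That comparison argument is the missing idea; without it the triangularity facts you invoke do not close the induction.

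Separately, your treatment of the ``in particular'' clause goes astray: no further work is needed there, and the machinery you propose is both unnecessary and doubtful. A subset of $W$ is a union of left cells if and only if it is closed under $\sim_L$ (not under $\leq_L$). So take $x\in X_I\Gamma$ and $y\sim_L x$; by the part of the theorem you have already deduced, $\prI_I(y)\sim_{L,I}\prI_I(x)$, and since $\prI_I(x)\in\Gamma$ and $\Gamma$ is an equivalence class for $\sim_{L,I}$, we get $\prI_I(y)\in\Gamma$, i.e.\ $y\in X_I\Gamma$. That is the whole argument. The stronger statement you set out to prove instead --- that for fixed $x\in X_I$ one has $xu\sim_L xu'$ whenever $u\sim_{L,I}u'$, via a multiplication formula $C_t'\cdot C_{xu}'=\sum_{u'}h^I_{t,u,u'}C_{xu'}'+(\hbox{longer terms})$ --- is not needed, is not claimed by the theorem ($X_I\Gamma$ is in general a union of \emph{several} left cells of $W$), and that formula is not correct as stated (the product also produces terms $C_{x''u''}'$ with $x''$ shorter than $x$). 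In short: the reduction of the theorem to the left-ideal lemma and the deduction of the $\sim_L$ statement are fine, the ``in particular'' step should be replaced by the two-line argument above, and the proof of the left-ideal lemma itself is the genuine gap.
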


This was first proved by Barbasch--Vogan \cite[Cor.~3.7]{BV2} for finite 
Weyl groups in the equal parameter case (using the theory of primitive 
ideals); see \cite{myind} for the general case.

\begin{exmp} \label{cellind1} Let $\Gamma$ be a left-closed subset of $W_I$. 
Then the subset $X_I\Gamma$ of $W$ is left-closed (see 
Theorem~\ref{cellind}).  Let $\bH_I\subseteq \bH$ be the parabolic 
subalgebra spanned by all $T_w$ where
$w\in W_I$. Then we obtain the $\bH_I$-module $[\Gamma]_A$, with 
standard basis $(e_w)_{w\in\Gamma}$, and the $\bH$-module 
$[X_I\Gamma]_A$, with standard basis $(e_{xw})_{x\in X_I,w\in\Gamma}$.
By \cite[3.6]{myrel}, we have an isomorphism of $\bH$-modules
\[ [X_I\Gamma]_A\stackrel{\sim}{\rightarrow} \Ind_I^S([\Gamma]_A), 
\qquad e_{yv} \mapsto \sum_{x \in X_I,w\in \Gamma} p_{xu,yv}^*\,
\big(T_x \otimes e_u\big),\]
where $p_{xu,yv}^*\in A$ are the {\em relative} Kazhdan--Lusztig
polynomials of \cite[Prop.~3.3]{myind} and, for any $\bH_I$-module $V$, we 
denote by $\Ind_I^S(V):=\bH \otimes_{\bH_I} V$ the {\em induced module}, 
with basis $(T_x\otimes e_w)_{x\in X_I,w\in \Gamma}$. (In 
\cite[\S 3]{myrel}, it is not stated explicitly that $\Gamma=X_I\Gamma$ 
is left-closed, but this condition is used implicitly in the discussion 
there.)
\end{exmp}

A first invariant of left cells is given as follows. For any $w\in W$, we 
denote by $\RC(w):=\{s\in S\mid ws<w\}$ the {\em right descent set} of~$w$ 
(or {\it $\tau$-invariant} of $w$ in the language of primitive ideals; see
\cite{BV2}). The next result has been proved in~\cite[2.4]{KL} (for the 
equal parameter case) and in~\cite[8.6]{Lusztig03} (for the unequal parameter 
case).

\begin{prop}[Kazhdan-Lusztig, Lusztig]\label{klright} Let $x,y\in W$.
\begin{itemize} 
\itemth{a} If $x \leq_L y$ then $\RC(y) \subseteq \RC(x)$. 

\itemth{b} If $x\sim_{L} y$, then $\RC(x)=\RC(y)$. 

\itemth{c} For any $I\subseteq S$, the set $\{w\in W\mid \RC(w)=I\}$ is
a union of left cells of~$W$.
\end{itemize}
\end{prop}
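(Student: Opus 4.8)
The plan is to deduce everything from the defining multiplication rule for $C_s'$ in the $C'$-basis, exactly as in the proofs cited from \cite[2.4]{KL} and \cite[8.6]{Lusztig03}. Recall that $x\leftarrow_L y$ means $h_{s,y,x}\neq 0$ for some $s\in S$, i.e.\ $C_x'$ appears with non-zero coefficient in $C_s'\,C_y'$. So for part~(a) it suffices, by transitivity of $\leq_L$, to treat the case $x\leftarrow_L y$ and show $\RC(y)\subseteq\RC(x)$; then part~(b) follows at once by applying~(a) to both $x\leq_L y$ and $y\leq_L x$, and part~(c) is immediate from~(b) since $\{w\mid \RC(w)=I\}$ is then a union of left cells (any left cell is contained in exactly one such set).

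For the key step, fix $s\in S$ with $h_{s,y,x}\neq 0$ and let $t\in\RC(y)$, so $ty<y$; I must show $tx<x$. First I would dispose of the case $t=s$: if $sy<y$ then $C_s'\,C_y'=(v^{p_s}+v^{-p_s})\,C_y'$, so the only $z$ with $h_{s,y,z}\neq 0$ is $z=y$, forcing $x=y$ and hence $\RC(x)=\RC(y)$. So assume $t\neq s$, and also $sy>y$ (otherwise we are in the previous case). Then from the explicit formula, $x$ is either $sy$ or some $z$ with $sz<z<y$. In the second case $tz<z$ is built into the summation condition, so $tx<x$ and we are done. In the first case $x=sy$: since $ty<y<sy$ and $t\neq s$, a short Bruhat-order argument (using that $ty<y$ and $y<sy$, and the exchange/lifting property for $\{s,t\}$) gives $t(sy)<sy$, i.e.\ $tx<x$. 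Actually the cleanest route is to argue uniformly: one shows that every $z$ occurring in $C_s'\,C_y'$ (with $sy>y$) satisfies $\{u\in S\mid uy<y\}\setminus\{s\}\subseteq\{u\in S\mid uz<z\}$, which is precisely \cite[2.3(b)]{KL} / the relevant statement in \cite[\S 6]{Lusztig03}, and since $t\neq s$ this yields $tz<z$.

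The main obstacle is the Bruhat-combinatorial lemma underlying the case $x=sy$ (and more generally the claim that the $M_{z,y}^s$-terms, combined with $C_{sy}'$, only involve elements $z$ with $ty<y\Rightarrow tz<z$ for $t\neq s$). In the equal-parameter case this is \cite[Lemma~2.3]{KL}; in the unequal-parameter case it is part of the setup in \cite[Chap.~6]{Lusztig03}, and it rests on properties of the polynomials $M_{z,y}^s$ together with standard facts about distinguished coset representatives for the dihedral parabolic $W_{\{s,t\}}$. Since the statement of the proposition explicitly attributes the result to those references, I would simply invoke \cite[2.4]{KL} and \cite[8.6]{Lusztig03} for the proof of~(a)–(b), and then record the one-line deduction of~(c): a left cell is an equivalence class for $\sim_L$, on which $\RC$ is constant by~(b), so $\{w\in W\mid\RC(w)=I\}$, being a union of the fibres of the (cell-constant) function $\RC$, is a union of left cells.

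\medskip
\noindent\emph{Proof.}
Part~(a): by the definition of $\leq_L$ as the transitive closure of $\leftarrow_L$, it is enough to prove the assertion when $x\leftarrow_L y$, that is, when $h_{s,y,x}\neq 0$ for some $s\in S$. Inspecting the formula for $C_s'\,C_y'$: if $sy<y$, then the only non-zero coefficient is $h_{s,y,y}=v^{p_s}+v^{-p_s}$, so $x=y$ and $\RC(x)=\RC(y)$; if $sy>y$, then $x=sy$ or $x$ is some $z$ with $sz<z<y$. By \cite[2.3]{KL} in the equal parameter case, resp.\ by the properties of the $M_{z,y}^s$ recalled in \cite[Chap.~6]{Lusztig03} in general, every such $x$ satisfies $\RC(y)\setminus\{s\}\subseteq\RC(x)$; moreover $s\in\RC(x)$ as well (indeed $sx<x$ in all cases: $s(sy)=y<sy$, and $sz<z$ by the summation condition). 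Hence $\RC(y)\subseteq\RC(x)$.

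Part~(b): if $x\sim_L y$ then $x\leq_L y$ and $y\leq_L x$, so $\RC(y)\subseteq\RC(x)$ and $\RC(x)\subseteq\RC(y)$ by~(a), whence $\RC(x)=\RC(y)$.

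Part~(c): fix $I\subseteq S$. By~(b), the function $w\mapsto\RC(w)$ is constant on each left cell of $W$; therefore its fibre $\{w\in W\mid\RC(w)=I\}$ is a union of left cells. \qed
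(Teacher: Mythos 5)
Your overall strategy (reduce to a single step $x\leftarrow_L y$, analyse the formula for $C_s'C_y'$, and quote the descent-set lemma from \cite{KL}, \cite{Lusztig03}) is the classical one, but it is not the route the paper takes: the paper deduces part~(a) from Theorem~\ref{cellind} applied to the singleton $I=\{s\}$ for each $s\in\RC(y)$, using only that in $W_I=\{1,s\}$ the element $1$ is not $\leq_{L,I}$ the element $s$ because $p_s>0$; parts~(b) and~(c) then follow formally, exactly as in your last two paragraphs. That deduction is self-contained given Theorem~\ref{cellind} and makes no direct use of the structure of the $M_{z,y}^s$.

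Your direct argument as written contains a genuine error: $\RC(w)=\{s\in S\mid ws<w\}$ is the \emph{right} descent set, whereas $\leq_L$ is generated by \emph{left} multiplication by $C_s'$, and you consistently read $t\in\RC(y)$ as $ty<y$. With that reading both of your key steps fail. The summation condition $sz<z<y$ only records that $s$ is a left descent of $z$ and gives no information about $t\neq s$, so ``$tz<z$ is built into the summation condition'' is not true. The asserted Bruhat lemma ``$ty<y<sy$ and $t\neq s$ imply $t(sy)<sy$'' is false: if $st$ has order $3$ and $y=t$, then $ty=1<y<sy=st$, yet $t(sy)=tst>st$; the same example refutes your ``uniform'' claim $\{u\mid uy<y\}\setminus\{s\}\subseteq\{u\mid uz<z\}$, since $\{u\mid u\cdot st<st\}=\{s\}$. (What \cite[2.4]{KL} and \cite[8.6]{Lusztig03} actually assert is the right-descent statement.) With the correct right-descent reading, the case $x=sy$ is trivial (if $yt<y$ then $\ell(syt)\le\ell(yt)+1=\ell(y)<\ell(sy)$), but the case of a $z$ with $M_{z,y}^s\neq 0$ still requires the nontrivial fact that $yt<y$ forces $zt<z$ --- e.g.\ via the observation that the span of $\{C_w'\mid wt<w\}$ is the left ideal $\bH C_t'$ --- and this is not a consequence of the displayed multiplication rule alone. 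Your fallback of simply citing \cite[2.4]{KL} and \cite[8.6]{Lusztig03} for~(a) is of course legitimate, and your deductions of~(b) from~(a) and of~(c) from~(b) are correct and agree with the paper's.
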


We show how this can be deduced from Theorem~\ref{cellind}. First, note that 
(b) and (c) easily follow from (a), so we only need to prove (a). Let
$x,y\in W$ be such that $x \leq_L y$. Let $s\in \RC(y)$ and set $I=\{s\}$. 
Then $\prI_I(y)=s$ and so $\prI_I(x)\leq_{L,I} \prI_I(y)=s \in W_I=\{1,s\}$. 
Since $p_s>0$, the definitions immediately show that $s\leq_{I,L} 1$ but
$s\not\sim_{L,I} 1$. Hence, we must have $\prI_I(x) =s$ and so $s\in 
\RC(x)$. Thus, we have $\RC(y) \subseteq \RC(x)$. 

\section{Left cellular maps} \label{sec3}

\begin{defn} \label{leftc} A map $\delta :W\to W$ is
called {\bfit left cellular} if the following conditions are satisfied for 
every left cell $\Gamma \subseteq W$ (with respect to the given weights 
$\{p_s\mid s\in S\}$):
\begin{itemize}\itemindent0.5cm
\itemth{A1} $\delta(\Gamma)$ also is a left cell.
\itemth{A2} The map $\delta$ induces an $\bH$-module isomorphism 
$[\Gamma]_A\cong [\delta(\Gamma)]_A$.
\end{itemize}
\end{defn}

A prototype of such a map is given by the Kazhdan--Lusztig $*$-operations. 
We briefly recall how this works. For any $s,t\in S$ such that $st\neq ts$, 
we set 
\[ \DC_R(s,t):=\{w\in W \,|\, \RC(w)\cap \{s,t\} \text{ has exactly one
element}\}\]
and, for any $w\in \DC_R(s,t)$, we set $\cT_{s,t}(w):=\{ws,wt\}\cap 
\DC_R(s,t)$. (See \cite[\S 4]{KL}, \cite[\S 3]{voga}.) Note that
$\cT_{s,t}(w)$ consists of one or two elements; in order to have a uniform 
notation, we consider $\cT_{s,t}(w)$ as a multiset 
with two identical elements if $\{ws,wt\}\cap \DC_R(s,t)$ consists of only 
one element. 

If $st$ has order $3$, then the intersection $\{ws,wt\}\cap 
\DC_R(s,t)$ consists of only one element which will be denoted by $w^*$. 
Thus, we have $\cT_{s,t}(w)=\{w^*,w^*\}$ in this case.  With this notation,
we can now state:

\begin{prop}[Kazhdan--Lusztig $*$-operations \protect{\cite[\S 4]{KL}}] 
\label{starop} Assume that we are in the equal parameter case and that 
$st$ has order $3$. Then we obtain a left cellular map $\delta\colon W
\rightarrow W$ by setting 
\[\delta(w)=\left\{\begin{array}{cl} w^* & \quad \mbox{if $w\in 
\DC_R(s,t)$}, \\ w & \quad \mbox{otherwise}.\end{array}\right.\]
In particular, if $\Gamma \subseteq \DC_R(s,t)$ is a left cell, then 
so is $\Gamma^*:= \{w^* \mid w \in \Gamma\}$.
\end{prop}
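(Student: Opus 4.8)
The plan is to reduce the proposition, by means of induction of cells, to the rank-two parabolic subgroup $W_I$ with $I=\{s,t\}$ --- where $W_I$ is a dihedral group of order $6$ and every assertion is a finite computation --- and then to transport the result back to $W$ via the induction isomorphism of Example~\ref{cellind1}.

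So fix $I=\{s,t\}$ (recall $st$ has order $3$) and write each $w\in W$ as $w=x\cdot\prI_I(w)$ with $x\in X_I=\{z\in W\mid zs>z,\ zt>z\}$. Since $\ell(x\prI_I(w))=\ell(x)+\ell(\prI_I(w))$, for $r\in I$ we have $wr<w$ iff $\prI_I(w)r<\prI_I(w)$; hence $w\in\DC_R(s,t)$ iff $\prI_I(w)$ lies in $\DC:=\{u\in W_I\mid|\RC(u)\cap\{s,t\}|=1\}=\{s,t,st,ts\}$, and in that case $w^*=x\cdot(\prI_I(w))^*$, the $*$-operation on $W_I$ being given by the same recipe. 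By Proposition~\ref{klright}(b), $\RC(\cdot)$ is constant on left cells, so $\DC_R(s,t)$ is a union of left cells; thus a left cell $\Gamma$ of $W$ is either disjoint from $\DC_R(s,t)$ --- in which case $\delta|_\Gamma=\mathrm{id}$ and (A1), (A2) hold trivially --- or contained in it. In the latter case Theorem~\ref{cellind} shows $\prI_I(\Gamma)$ lies in a single left cell of $W_I$, which, being inside $\DC$, must be one of the two two-element left cells $\Gamma_1=\{s,ts\}$, $\Gamma_2=\{t,st\}$; say $\prI_I(\Gamma)\subseteq\Gamma_1$, so $\Gamma\subseteq X_I\Gamma_1$, which is again a union of left cells of $W$ by Theorem~\ref{cellind}.

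Next I would dispose of $W_I$ directly: its left cells in the equal parameter case are $\{1\}$, $\Gamma_1$, $\Gamma_2$, and $\{sts\}$, and the $*$-operation restricts to mutually inverse bijections $\Gamma_1\leftrightarrow\Gamma_2$ (explicitly $s\mapsto st$, $ts\mapsto t$). Writing out the matrices of $C_s'$ and $C_t'$ acting on the two-dimensional modules $[\Gamma_1]_A$ and $[\Gamma_2]_A$ in the standard bases $(e_s,e_{ts})$ and $(e_t,e_{st})$, one checks by inspection that $e_u\mapsto e_{u^*}$ intertwines these actions; as $\bH_I$ is generated by $C_s'$ and $C_t'$, this produces an $\bH_I$-module isomorphism $[\Gamma_1]_A\longiso[\Gamma_2]_A$, $e_u\mapsto e_{u^*}$. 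In particular the map on $W_I$ given by $u\mapsto u^*$ on $\DC$ and by the identity elsewhere is left cellular for $W_I$.

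It remains to push this up. By Example~\ref{cellind1} there are $\bH$-module isomorphisms $[X_I\Gamma_i]_A\cong\Ind_I^S([\Gamma_i]_A)$ for $i=1,2$, and composing with the isomorphism just found gives an $\bH$-module isomorphism $\Phi\colon[X_I\Gamma_1]_A\longiso[X_I\Gamma_2]_A$. The main obstacle is to verify that $\Phi$ sends the standard basis vector $e_w$ to $e_{w^*}$ for every $w\in X_I\Gamma_1$: by the explicit formula in Example~\ref{cellind1} this comes down to the identity $p^*_{xu,yv}=p^*_{xu^*,yv^*}$ for the relative Kazhdan--Lusztig polynomials (with $x,y\in X_I$ and $u,v\in\Gamma_1$), which I would extract from their construction in \cite[Prop.~3.3]{myind} --- this is essentially the computation underlying \cite[\S 4]{KL} transposed to the induced setting, and is where the real work lies, since an arbitrary $\bH$-isomorphism $[X_I\Gamma_1]_A\cong[X_I\Gamma_2]_A$ need not respect the filtration by cell modules. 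Granting it, the proof concludes: for $w,w'\in X_I\Gamma_1$ the coefficient of $e_w$ in $C_s'\cdot e_{w'}$ inside $[X_I\Gamma_1]_A$ is precisely the structure constant $h_{s,w',w}$, so the relations $\leftarrow_L$, $\le_L$, $\sim_L$ on the left-closed set $X_I\Gamma_1$ are intrinsic to the $\bH$-module $[X_I\Gamma_1]_A$ together with its standard basis; hence the basis-preserving $\bH$-isomorphism $\Phi$ carries the left cells of $W$ inside $X_I\Gamma_1$ bijectively onto those inside $X_I\Gamma_2$, sending $\Gamma$ to $\delta(\Gamma)=\{w^*\mid w\in\Gamma\}$ and restricting to an $\bH$-module isomorphism $[\Gamma]_A\cong[\delta(\Gamma)]_A$. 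This gives (A1) and (A2), and the last assertion of the proposition is exactly the case $\Gamma\subseteq\DC_R(s,t)$.
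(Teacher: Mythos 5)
Your proposal is correct and follows essentially the same route as the paper's own (new) proof of this proposition, which is obtained by applying Theorem~\ref{myprop} to the strongly KL-admissible pair $(\{s,t\},\delta_{s,t})$ of Example~\ref{ex:eq}: reduce to the dihedral parabolic $W_I$, verify the cellular property there by a direct rank-two computation, and transport it to $W$ via the induction isomorphism of Example~\ref{cellind1}. The one step you defer---the identity $p^*_{xu,yv}=p^*_{xu^*,yv^*}$ for the relative Kazhdan--Lusztig polynomials---is exactly \cite[Lemma~3.8]{myrel}, which the paper likewise quotes rather than reproves (together with \cite[Prop.~3.9]{myrel} for the conclusion that left cells map onto left cells), so you have correctly isolated the technical crux.
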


(In Corollary~\ref{star1} below, we extend this to the unequal parameter
case.)

If $st$ has order $\geq 4$, then the set $\cT_{s,t}(w)$ may contain two 
distinct elements. In order to obtain a single-valued operator, Vogan 
\cite[\S 4]{voga1} (for the case $m=4$; see also McGovern \cite[\S 4]{mcg}) 
and Lusztig \cite[\S 10]{Lu1} (for any $m\geq 4$) propose an alternative
construction, as follows.

\begin{rem} \label{remstrings} Let $s,t\in S$ be such that $st$ has any
finite order $m\geq 3$. Let $W_{s,t}=\langle s,t\rangle$, a dihedral group 
of order $2m$. For any $w\in W$, the coset $wW_{s,t}$ can be partitioned 
into four subsets: one consists of the unique element $x$ of minimal length, 
one consists of the unique element of maximal length, one consists of the 
$(m-1)$ elements $xs,xst,xsts, \ldots$ and one consists of the $(m-1)$ 
elements $xt,xts, xtst,\ldots$. Following Lusztig \cite[10.2]{Lu1}, the 
last two subsets (ordered as above) are called {\em strings}. (Note that 
Lusztig considers the coset $W_{s,t}w$ but, by taking inverses, the two 
versions are clearly equivalent.) Thus, if $w\in \DC_R(s,t)$, then $w$ 
belongs to a unique string which we denote by $\lambda_w$; we certainly 
have $\cT_{s,t}(w)\subseteq\lambda_w \subseteq \DC_R(s,t)$ for all 
$w\in \DC_R(s,t)$. 

We define an involution $\DC_R(s,t)\rightarrow \DC_R(s,t)$, $w\mapsto 
\tilde{w}$, as follows. Let $w\in\DC_R(s,t)$ and $i\in\{1,\ldots, m-1\}$ 
be the index such that $w$ is the $i$th element of the string $\lambda_w$. 
Then $\tilde{w}$ is defined to be the $(m-i)$th element of $\lambda_{w}$. 
Note that, if $m=3$, then $\tilde{w}=w^*$, with $w^*$ as in 
Proposition~\ref{starop}. 
\end{rem}

Let us write $T_xT_y=\sum_{z\in W} f_{x,y,z}T_z$ where $f_{x,y,z}\in A$
for all $x,y,z\in W$. Following \cite[13.2]{Lusztig03}, we say that 
$\bH$ is {\bfit bounded} if there exists some positive $N\in \AC$ such that
$v^{-N}f_{x,y,z}\in A_{\leq 0}$ for all $x,y,z\in W$. We can now state:

\begin{prop}[Lusztig \protect{\cite[10.7]{Lu1}}] \label{lu107}
Assume that we are in the equal parameter case and that $\bH$ is 
bounded. If $\Gamma \subseteq \DC_R(s,t)$ is a left cell, then so is 
$\tilde{\Gamma}:=\{\tilde{w} \mid w \in \Gamma\}$.
\end{prop}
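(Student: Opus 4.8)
The plan is to deduce the statement from the machinery of induced cells (Theorem~\ref{cellind} and Example~\ref{cellind1}) rather than from Lusztig's original string computations, by reducing to the dihedral group $W_{s,t}$ and exhibiting the string-reversal involution $w\mapsto\tilde w$ as a left cellular map coming from an $\bH_{s,t}$-module automorphism. First I would set $I=\{s,t\}$ and apply the coset decomposition $W=X_I W_I$; by Theorem~\ref{cellind}, for a left cell $\Gamma\subseteq\DC_R(s,t)$ of $W$ the projections $\prI_I(w)$, $w\in\Gamma$, all lie in a single left cell of $W_I$, and conversely $X_I\Gamma'$ is a union of left cells of $W$ for any left cell $\Gamma'$ of $W_I$. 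So it suffices to understand left cells of the dihedral group $W_I=W_{s,t}$ of order $2m$ in the equal parameter case, and to check that the involution $w\mapsto\tilde w$ on $\DC_R(s,t)\cap W_I$ permutes them while inducing $\bH_I$-module isomorphisms $[\Gamma']_A\cong[\tilde\Gamma']_A$.

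The key step, then, is the explicit description of the left cells of a dihedral group. In the equal parameter case these are classical: the elements of $W_{s,t}$ of a given nonzero length that are not the longest element split into the two strings, each string is a left cell, and the two strings attached to $W_{s,t}$ (there are exactly two of size $m-1$, together with the singleton $\{1\}$ and the singleton $\{w_0\}$ of the longest element) carry isomorphic $\bH_I$-modules — indeed both give the ``standard'' two-dimensional type module, and the reversal $w\mapsto\tilde w$ is precisely the bijection matching up the Kazhdan--Lusztig bases of these two realizations. I would verify A1 and A2 here by a direct computation with the $C'_w$ in $\bH_I$ using the formula for $C'_s C'_y$ recalled in Section~\ref{sec1}: reversing a string exchanges the roles of $s$ and $t$ on the relevant coset, and the structure constants $h_{r,x,y}$ ($r\in\{s,t\}$) transform accordingly, giving the module isomorphism.

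Once the dihedral case is settled, I would globalize: by Example~\ref{cellind1}, $[X_I\Gamma']_A\cong\Ind_I^S([\Gamma']_A)$, and the bijection $\tilde{}\;$ on $W_I$, extended to $X_I W_I$ by acting only on the $W_I$-component, intertwines $\Ind_I^S([\Gamma']_A)$ with $\Ind_I^S([\tilde\Gamma']_A)$ via $T_x\otimes e_u\mapsto T_x\otimes e_{\tilde u}$. Since induction is exact and sends left cell modules to direct sums of left cell modules compatibly with the cell partition (Theorem~\ref{cellind}), this shows that $\tilde\Gamma$ is again a single left cell of $W$ and that $[\Gamma]_A\cong[\tilde\Gamma]_A$ as $\bH$-modules, i.e.\ the map $w\mapsto\tilde w$ (extended by the identity outside $\DC_R(s,t)$) is left cellular. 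Here the boundedness hypothesis enters exactly as in \cite[10.7]{Lu1}: it guarantees that the left cells of $W$ are ``compatible'' with the filtration by the $\ZM$-grading coming from $\ell$, so that the induced-module decomposition genuinely refines to the cell partition and the a priori only ``stable under $\sim_{LR}$'' statement can be upgraded to ``$\tilde\Gamma$ is a left cell.''

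The main obstacle I anticipate is not the dihedral computation (which is a finite check) but the globalization step: one must ensure that the bijection $\prI_I$ interacts with the string structure the way one needs — concretely, that $w\in\DC_R(s,t)$ forces $\prI_I(w)$ to lie in the ``non-extreme'' part of its $W_I$-coset so that $\tilde w$ is well defined with $\prI_I(\tilde w)=\widetilde{\prI_I(w)}$ and $X_I$-component unchanged — and that the resulting bijection on $W$ really does send each left cell onto a left cell, not merely onto a union of left cells inside one two-sided cell. Controlling this is exactly where boundedness and the results of \cite{myind}, \cite{myrel} on relative Kazhdan--Lusztig polynomials are needed; modulo that input, the argument is a clean reduction to the rank-two case.
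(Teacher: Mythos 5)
Your overall strategy --- reduce to the dihedral parabolic $W_I$ with $I=\{s,t\}$, show that string reversal is a left cellular map of $W_I$, then globalize by induction of cells via the relative Kazhdan--Lusztig polynomials of \cite{myrel} --- is exactly the route the paper takes (Theorem~\ref{myprop}, Example~\ref{ex:eq}, Corollary~\ref{star1}). But your key dihedral step rests on a false description of the left cells of $W_{s,t}$. In the equal parameter case these are \emph{not} the two strings $(s,st,sts,\dots)$ and $(t,ts,tst,\dots)$; they are $\{1\}$, $\{w_{s,t}\}$ and the two sets $\G_s^{s,t}=\{s,ts,sts,\dots\}$ and $\G_t^{s,t}=\{t,st,tst,\dots\}$ singled out by the right descent set --- as they must be by Proposition~\ref{klright}, since the right descent sets \emph{alternate} along a string, so a string cannot be contained in a left cell. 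Each string meets both left cells. Consequently the content of (A1) for the reversal $u\mapsto\tilde u$ on $W_I$ is not that it preserves each string (which is trivial), but that it carries $\G_s^{s,t}$ onto $\G_t^{s,t}$ when $st$ has odd order and stabilizes each of them when the order is even; and (A2) is the nontrivial assertion that the $(m-1)$-dimensional cell modules $[\G_s^{s,t}]_A$ and $[\G_t^{s,t}]_A$ (not ``two-dimensional'' ones) are intertwined by this bijection. That is exactly what \cite[Lemma~7.2, Prop.~7.3]{Lusztig03} supplies, via the identification $\tilde u=\s_{s,t}(u)w_{s,t}$; your proposed direct computation with the $C_w'$ would have to establish this statement, and as written it is aimed at the wrong target.

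A second point: the role you assign to boundedness is not right. In the globalization step the only input beyond Theorem~\ref{cellind} and Example~\ref{cellind1} is the invariance of the relative Kazhdan--Lusztig polynomials, $p^*_{xu,yv}=p^*_{x\tilde u,y\tilde v}$ \cite[Lemma~3.8, Prop.~3.9]{myrel}, which is what upgrades the module isomorphism $\Ind_I^S([\G']_A)\cong\Ind_I^S([\tilde\G']_A)$ to the statement that the bijection maps the cell partition of $X_I\G'$ onto that of $X_I\tilde\G'$ (rather than merely onto a union of cells). Boundedness plays no role here --- indeed the whole point of the paper's new proof (Corollary~\ref{star1}) is that the boundedness and global equal-parameter hypotheses in Lusztig's original statement can be dropped, only $p_s=p_t$ being needed. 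Your appeal to boundedness to make ``the induced-module decomposition genuinely refine to the cell partition'' does not correspond to any actual step of the argument.
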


(It is assumed in [{\em loc.\ cit.}] that $W$ is crystallographic, but 
this assumption is now superfluous thanks to Elias--Williamson \cite{EW}. 
The boundedness assumption is obviously satisfied for all finite Coxeter 
groups. It also holds, for example, for affine Weyl groups; see the
remarks following \cite[13.4]{Lusztig03}.) 

In Corollary~\ref{star1} below, we shall show that $w\mapsto \tilde{w}$ 
also gives rise to a left cellular map and that this works without any 
assumption, as long as $p_s=p_t$. 

\section{Vogan's invariants} \label{seceq}

\begin{svgraybox}{{\bf Hypothesis.} {\it 
Throughout this section, and only in this section, 
we assume that we are in the equal parameter case.}}
\end{svgraybox}

We recall the following definition.
\begin{defn}[Vogan \protect{\cite[3.10, 3.12]{voga}}] \label{defvog} 
For $n \ge 0$, we define an equivalence relation $\approx_n$ on $W$ 
inductively as follows. Let $x,y \in W$.
\begin{itemize}
\item[$\bullet$] For $n=0$, we write $x \approx_0 y$ if $\RC(x)=\RC(y)$. 
\item[$\bullet$] For $n\ge 1$, we write $x \approx_n y$ if $x\approx_{n-1} y$ 
and if, for any $s,t \in S$ such that $x$, $y \in \DC_R(s,t)$ (where $st$ 
has order $3$ or $4$), the following holds: if $\cT_{s,t}(x)=\{x_1,x_2\}$ 
and $\cT_{s,t}(y) =\{y_1,y_2\}$, then either $x_1\approx_{n-1} y_1$, 
$x_2\approx_{n-1}y_2$ or $x_1\approx_{n-1} y_2$, $x_2\approx_{n-1} y_1$.
\end{itemize}
If $x \approx_n y$ for all $n\geq 0$, then $x$, $y$ are said to 
have the same {\bfit generalised $\tau$-invariant}. 
\end{defn}

The following result was originally formulated and proved for finite 
Weyl groups by Vogan \cite[\S 3]{voga}, in the language of primitive 
ideals in enveloping algebras. It then follows for cells as defined in 
Section~\ref{sec1} using a known dictionary (see, e.g., Barbasch--Vogan 
\cite[\S 2]{BV2}). The proof in general relies on Proposition~\ref{starop} 
and results on {\it strings} as defined in Remark~\ref{remstrings}.

\begin{thm}[Kazhdan--Lusztig \protect{\cite[\S 4]{KL}}, Lusztig 
\protect{\cite[\S 10]{Lu1}}, Vogan \protect{\cite[\S 3]{voga}}] 
\label{vogan1} Assume that $\bH$ is bounded and recall that we are in the 
equal parameter case. Let $\Gamma$ be a left cell of $W$. Then all elements 
in $\Gamma$ have the same generalised $\tau$-invariant.
\end{thm}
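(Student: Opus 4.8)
The plan is to prove the statement by induction on $n$, showing that elements lying in the same left cell satisfy $x\approx_n y$ for every $n\ge 0$; since $\approx_n$ is built only from the relations $\RC(\cdot)$ and the operators $\cT_{s,t}$, it suffices to control how these interact with the left cell partition. The base case $n=0$ is exactly Proposition~\ref{klright}(b): if $x\sim_L y$ then $\RC(x)=\RC(y)$, hence $x\approx_0 y$. For the inductive step, assume that $x\sim_L y$ implies $x\approx_{n-1}y$ for all $x,y$, and let $\Gamma$ be a left cell with $x,y\in\Gamma$. Fix $s,t\in S$ with $x,y\in\DC_R(s,t)$, where $st$ has order $3$ or $4$; we must match up $\cT_{s,t}(x)=\{x_1,x_2\}$ with $\cT_{s,t}(y)=\{y_1,y_2\}$ so that corresponding elements are $\approx_{n-1}$-equivalent.

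The key point is that the relevant operator is left cellular. When $st$ has order $3$, Proposition~\ref{starop} says $w\mapsto w^*$ (extended by the identity off $\DC_R(s,t)$) is left cellular; in particular $\Gamma^*:=\{w^*\mid w\in\Gamma\}$ is again a left cell, so $x^*\sim_L y^*$, and since $\cT_{s,t}(x)=\{x^*,x^*\}$, $\cT_{s,t}(y)=\{y^*,y^*\}$, the inductive hypothesis gives $x^*\approx_{n-1}y^*$ and we are done in this case. When $st$ has order $4$, I would appeal instead to the string construction of Remark~\ref{remstrings} together with Proposition~\ref{lu107}: since $\bH$ is bounded and we are in the equal parameter case, $w\mapsto\tilde w$ carries the left cell $\Gamma$ to a left cell $\tilde\Gamma$, so $\tilde x\sim_L\tilde y$ and hence $\tilde x\approx_{n-1}\tilde y$ by induction. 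It then remains to relate the single-valued involution $w\mapsto\tilde w$ to the possibly two-valued set $\cT_{s,t}(w)$: for $m=4$ the string $\lambda_w$ has three elements, $w$ is the $i$th with $i\in\{1,2,3\}$, and one checks directly from the definitions that $\cT_{s,t}(w)$ consists of the neighbours of $w$ in the string, so in particular $\tilde w\in\cT_{s,t}(w)$ always, and moreover comparing positions in $\lambda_x$ versus $\lambda_y$ (which have the same length) forces a consistent pairing of $\{x_1,x_2\}$ with $\{y_1,y_2\}$ in which one of the matched pairs is $\{\tilde x,\tilde y\}$. Combining this with $\tilde x\approx_{n-1}\tilde y$, and handling the remaining matched pair by a further application of the left cellularity of a suitable string operator (or of a $*$-operation on the shorter substring), completes the inductive step.

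I expect the main obstacle to be the bookkeeping in the order-$4$ case: one must verify carefully that $\cT_{s,t}(w)\subseteq\lambda_w$ in the precise form needed, that the two candidate elements of $\cT_{s,t}(x)$ and $\cT_{s,t}(y)$ occupy the same positions within their respective strings (using that $x\approx_0 y$, so $\RC(x)\cap\{s,t\}$ and $\RC(y)\cap\{s,t\}$ agree, which pins down the position of $x$ and $y$ in their strings), and that each matched pair can be brought into the scope of an established left cellular map. The equal parameter hypothesis and the boundedness of $\bH$ enter exactly here, via Proposition~\ref{lu107}; everything else is a formal induction driven by Propositions~\ref{klright} and~\ref{starop}.
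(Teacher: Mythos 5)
Your induction scheme, the base case, and the treatment of the order-$3$ case all match the paper's proof. The order-$4$ case, however, contains a genuine gap: the key claim that ``$\tilde w\in\cT_{s,t}(w)$ always'' is false. For $m=4$ the string $\lambda_w$ has three elements; $\cT_{s,t}(w)$ consists of the \emph{neighbours} of $w$ in $\lambda_w$ (as you correctly say), but $\tilde w$ is the reflection of $w$ about the midpoint of the string, and these never coincide: if $w$ is the first (resp.\ third) element, then $\cT_{s,t}(w)$ is the doubled second element while $\tilde w$ is the third (resp.\ first) element; if $w$ is the middle element, then $\tilde w=w$ while $\cT_{s,t}(w)$ consists of the two outer elements. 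Consequently Proposition~\ref{lu107} (equivalently Corollary~\ref{star1}), which only controls $w\mapsto\tilde w$, gives no information about $\cT_{s,t}(x)$ versus $\cT_{s,t}(y)$~---~in the middle-element case the conclusion $\tilde x\sim_L\tilde y$ is literally just $x\sim_L y$ again. Your fallback (``a further application of the left cellularity of a suitable string operator, or of a $*$-operation on the shorter substring'') does not correspond to any available map: there is no left cellular map sending an element of $\DC_R(s,t)$ to its string-neighbour when $m=4$. Also, knowing $\RC(x)\cap\{s,t\}=\RC(y)\cap\{s,t\}$ does not pin down the positions of $x,y$ in their strings; it only separates ``middle'' from ``outer''.

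That middle/outer dichotomy is exactly how the paper proceeds, and both branches need ingredients absent from your proposal. If some element of $\Gamma$ is an outer element of its string, then every element of $\cT_{s,t}(x)$ for $x\in\Gamma$ has the same right descent set among $\{s,t\}$, and Lusztig's result \cite[Prop.~10.7]{Lu1} that $\bigl(\bigcup_{w\in\Gamma}\lambda_w\bigr)\setminus\Gamma$ is a union of at most two left cells, distinguished by their descent sets, forces all these elements into a single left cell; induction then applies. If every element of $\Gamma$ is a middle element, one must instead propagate the elementary relations $\leftarrow_L$ from the middle elements to the outer ones using the identities $a_{11}=a_{33}$, $a_{13}=a_{31}$, $a_{22}=a_{11}+a_{13}$ of \cite[10.4.2]{Lu1}, and then upgrade the resulting $\leq_L$ relations to $\sim_L$ via \cite[Cor.~6.3]{Lu1}, using that all elements of a string lie in one right cell and hence in one two-sided cell. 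Without these two arguments the inductive step for $m=4$ does not go through.
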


\begin{proof}\smartqed We prove by induction on $n$ that, if $y,w\in W$
are such that $y\sim_L w$, then $y\approx_n w$. For $n=0$, this holds by 
Propositon~\ref{klright}. Now let $n>0$. By induction, we already know 
that $y\approx_{n-1} w$. Then it remains to consider $s,t \in S$ such 
that $st\neq ts$ and $y,w\in\DC_R(s,t)$.  If $st$ has order $3$, then 
$\cT_{s,t}(y)=\{y^*,y^*\}$ and $\cT_{s,t}(w)=\{w^*,w^*\}$; furthermore, 
by Proposition~\ref{starop}, we have $y^*\sim_L w^*$ and so 
$y^*\approx_{n-1} w^*$, by induction. Now assume that $st$ has order~$4$. 
In this case, the argument is more complicated (as it is also in the 
setting of \cite[\S 3]{voga}.) Let $I=\{s,t\}$ and $\Gamma$ be 
the left cell containing $y,w$. Since all elements in $\Gamma$ have the
same right descent set (by Proposition~\ref{klright}), we can choose the
notation such that $xs<x$ and $xt>x$ for all $x\in \Gamma$. Then, for 
$x\in \Gamma$, we have $x=x's$, $x=x'ts$ or $x=x'sts$ where $x'\in X_I$. 
This yields that 
$$
\cT_{s,t}(x)=\left\{\begin{array}{cl} 
\{x'st,x'st\}  & \qquad \mbox{if $x=x's$},\\
\{x't,x'tst\}  & \qquad \mbox{if $x=x'ts$},\\
\{x'st,x'st\}  & \qquad \mbox{if $x=x'sts$}.\end{array}\right.
\leqno{(\dagger)}
$$
We now consider the string $\lambda_x$ and distinguish two cases.

\medskip
\noindent{\bfit Case 1}. Assume that there exists some $x\in \Gamma$ such 
that $x=x's$ or $x=x'sts$. Then $\lambda_x=(x's,x'st,x'sts)$ and so the 
set $\Gamma^*:=\bigl(\bigcup_{w\in \Gamma} \lambda_w \bigr)\setminus 
\Gamma$ contains elements with different right descent sets. On the other 
hand, by \cite[Prop.~10.7]{Lu1}, $\Gamma^*$ is the union of at most two 
left cells. (Again, the assumption in [{\em loc.\ cit.}] that $W$ is 
crystallographic is now superfluous thanks to \cite{EW}.) We conclude 
that $\Gamma^*=\Gamma_1\cup\Gamma_2$ where $\Gamma_1$, $\Gamma_2$ are left 
cells such that:  
\begin{itemize}
\item all elements in $\Gamma_1$ have $s$ in their right descent set, but
not $t$;
\item all elements in $\Gamma_2$ have $t$ in their right descent set, but
not $s$.
\end{itemize}
Now consider $y,w \in\Gamma$; we write $\cT_{s,t}(y)=\{y_1, y_2\}\subseteq 
\Gamma^*$ and $\cT_{s,t}(w)=\{w_1,w_2\}\subseteq \Gamma^*$. By 
($\dagger$), all the elements $y_1,y_1,w_1,w_2$ belong to $\Gamma_2$. In 
particular, $y_1\sim_L w_1$, $y_2\sim_L w_2$ and so, by induction, 
$y_1\approx_{n-1} w_1$, $y_2\approx_{n-1} w_2$. 

\medskip
\noindent{\bfit Case 2}. We are not in Case~1, that is, all elements 
$x\in\Gamma$ have the form $x=x'ts$ where $x'\in X_I$. Then $\lambda_x=
(x't,x'ts, x'tst)$ for each $x\in\Gamma$. Let us label the elements in such
a string as $x_1,x_2,x_3$. Then $x=x_2$ and $\cT_{s,t}(x)=\{x't,x'tst\}=
\{x_1,x_3\}$. 

Now consider $y,w\in\Gamma$. There is a chain of elements
which connect $y$ to $w$ via the elementary relations $\leftarrow_L$, and 
vice versa. Assume first that $y,w$ are directly connected as 
$y\leftarrow_L w$. Using the labelling $y=y_2$, $w=w_2$ and the notation 
of \cite[10.4]{Lu1}, this means that $a_{22}\neq 0$. Hence, the identities 
``$a_{11}=a_{33}$'', ``$a_{13}=a_{31}$'', ``$a_{22}=a_{11}+a_{13}$'' in
\cite[10.4.2]{Lu1} imply that 
\[(y_1\leftarrow_L w_1 \mbox{ and } y_3 \leftarrow_L w_3) \quad 
\mbox{or} \quad (y_1\leftarrow_L w_3\mbox{ and } y_3\leftarrow_L w_1).\]
Now, in general, there is a sequence of elements $y=y^{(0)},y^{(1)},\ldots,
y^{(k)}=w$ in $\Gamma$ such that $y^{(i-1)} \leftarrow_L y^{(i)}$ for 
$1\leq i\leq k$. At each step, the elements in the strings corresponding 
to these elements are related as above. Combining these steps, one easily 
sees that 
\[ (y_1\leq_L w_1 \mbox{ and } y_3\leq_L w_3) \quad \mbox{or} \quad
(y_1\leq_L w_3 \mbox{ and } y_3\leq_L w_1).\]
(See also \cite[Prop.~4.6]{shi}.) Now, all elements in a string belong to 
the same right cell (see \cite[10.5]{Lu1}); in particular, all the elements 
$y_i,w_j$ belong to the same two-sided cell. Hence, \cite[Cor.~6.3]{Lu1} 
implies that either $y_1 \sim_L w_1$, $y_3\sim_L w_3$ or $y_1\sim_L w_3$, 
$y_3\sim_L w_1$. (Again, the assumption in [{\em loc.\ cit.}] that $W$ is 
crystallographic is now superfluous thanks to \cite{EW}.) Consequently, 
by induction, we have either $y_1\approx_{n-1} w_1$, $y_3\approx_{n-1}w_3$ 
or $y_1\approx_{n-1} w_3$, $y_3\approx_{n-1}w_1$.
\qed\end{proof}

One of the most striking results about this invariant has been obtained
by Garfinkle \cite[Theorem~3.5.9]{gar3}: two elements of a Weyl group of 
type $B_n$ belong to the same left cell (equal parameter case) if and 
only if the elements have the same generalised $\tau$-invariant. This
fails in general; a counter-example is given by $W$ of type $D_n$ for
$n\geq 6$ (as mentioned in the introduction of \cite{gar1}).

\begin{rem} \label{rem11} Vogan \cite[\S 4]{voga1} also proposed
the following modification of the above invariant. Let $s,t\in S$ be such 
that $st$ has finite order $m\geq 3$. Let us set $\tilde{\cT}_{s,t}(w)
:=\{\tilde{w}\}$ for any $w\in \DC_R(s,t)$, with $\tilde{w}$ as in 
Remark~\ref{remstrings}. Then we obtain a new invariant by exactly 
the same procedure as in Definition~\ref{defvog}, but using 
$\tilde{\cT}_{s,t}$ instead of $\cT_{s,t}$ and allowing any $s,t\in S$ 
such that $st$ has finite order~$\geq 3$. (Note that Vogan only
considered the case where $m=4$, but then Lusztig's method of strings
shows how to deal with the general case.) In any case, this is the
model for our more general construction of invariants below.
\end{rem}

\section{Induction of left cellular maps} \label{sec2}

We return to the general setting of Section~\ref{sec1}, where 
$\{p_s\mid s\in S\}$ are any positive weights for $W$. 

\begin{defn} \label{mydef1} A pair $(I,\delta)$ consisting of a subset
$I\subseteq S$ and a left cellular map $\delta \colon W_I\rightarrow W_I$ 
is called {\bfit KL-admissible}. We recall that this means that the 
following conditions are satisfied for every left cell $\Gamma \subseteq 
W_I$ (with respect to the weights $\{p_s\mid s\in I\}$):
\begin{itemize}\itemindent0.5cm
\itemth{A1} $\delta(\Gamma)$ also is a left cell.
\itemth{A2} The map $\delta$ induces an $\bH_I$-module isomorphism 
$[\Gamma]_A\cong [\delta(\Gamma)]_A$.
\end{itemize}
We say that $(I,\delta)$ is {\bfit strongly KL-admissible} if, in addition 
to $(\Arm 1)$ and $(\Arm 2)$, the following condition is satisfied:
\begin{itemize}\itemindent0.5cm
\itemth{A3} We have $u\sim_{R,I}\delta(u)$ for all $u\in W_I$. 
\end{itemize}
If $I \subseteq S$ and if $\d : W_I \to W_I$ is a map, we obtain a map 
$\d^L : W \to W$ by
$$\d^L(xw)=x\d(w)\qquad \mbox{for all $x \in X_I$ and $w \in W_I$}.$$
The map $\d^L$ will be called the {\it left extension} of $\d$ to $W$.
\end{defn}

\begin{thm} \label{myprop} Let $(I,\delta)$ be a KL-admissible pair. Then 
the following hold.
\begin{itemize}
\itemth{a} The left extension $\d^L:W\to W$ is a left cellular map for $W$.  
\itemth{b} If $(I,\delta)$ is strongly admissible, then we 
have $w\sim_R \d^L(w)$ for all $w\in W$.
\end{itemize}
\end{thm}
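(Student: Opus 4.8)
The plan is to deduce everything from the induction-of-cells machinery already set up, in particular Theorem~\ref{cellind} and the module isomorphism of Example~\ref{cellind1}. The key observation is that the left extension $\d^L$ is compatible with the decomposition $W = X_I W_I$ in the strongest possible sense: $\prI_I(\d^L(xw)) = \d(\prI_I(xw))$ for all $x \in X_I$, $w \in W_I$, and $\d^L$ does not move any element out of its coset $xW_I$. So the strategy is: first show that $\d^L$ sends left cells to left cells (condition (A1)), then upgrade this to the module isomorphism (A2), and finally, under strong admissibility, track the right pre-order.

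For part (a), condition (A1): let $\Gamma$ be a left cell of $W$. By Theorem~\ref{cellind}, $\Gamma' := \prI_I(\Gamma)$ is contained in a single left cell of $W_I$ — actually I would first argue $\Gamma' $ \emph{is} a left cell of $W_I$, or at least reduce to that case, using that $X_I \Lambda$ is a union of left cells of $W$ for any left cell $\Lambda$ of $W_I$ (last sentence of Theorem~\ref{cellind}). The cleanest route: fix a left cell $\Lambda$ of $W_I$; then $X_I \Lambda$ is a union of left cells of $W$, and I want to show $\d^L$ permutes these cells and restricts to cell isomorphisms. Since $\d$ is left cellular on $W_I$, $\d(\Lambda)$ is a left cell of $W_I$, and $\d^L$ maps $X_I\Lambda$ bijectively onto $X_I\d(\Lambda)$ by $xv \mapsto x\d(v)$. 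It remains to see this bijection carries left cells to left cells and is an $\bH$-module map on the associated cell modules. Here I invoke Example~\ref{cellind1}: $[X_I\Lambda]_A \cong \Ind_I^S([\Lambda]_A)$ and $[X_I\d(\Lambda)]_A \cong \Ind_I^S([\d(\Lambda)]_A)$ as $\bH$-modules, compatibly with the standard bases $e_{xv}$. Applying the induction functor $\Ind_I^S(-)$ to the $\bH_I$-isomorphism $[\Lambda]_A \cong [\d(\Lambda)]_A$ supplied by (A2) for $\d$, and transporting through these identifications, yields precisely an $\bH$-module isomorphism $[X_I\Lambda]_A \cong [X_I\d(\Lambda)]_A$ sending $e_{xv} \mapsto e_{x\d(v)}$. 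From such a basis-respecting $\bH$-isomorphism between cell-type modules one reads off that left cells inside $X_I\Lambda$ map to left cells inside $X_I\d(\Lambda)$ (the cell structure is detected by the submodule lattice / the $h$-coefficients), which gives (A1), and the restriction to each cell is the required isomorphism, giving (A2). Running this over all left cells $\Lambda$ of $W_I$ covers all of $W$.

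For part (b), assume (A3): $u \sim_{R,I} \d(u)$ for all $u \in W_I$. I want $w \sim_R \d^L(w)$ for all $w \in W$. Write $w = xu$ with $x \in X_I$, $u \in W_I$, so $\d^L(w) = x\d(u)$. By (\ref{eq:left-right}) it is equivalent to show $w^{-1} \sim_L (\d^L(w))^{-1}$, i.e.\ $u^{-1}x^{-1} \sim_L \d(u)^{-1}x^{-1}$. Now $u \sim_{R,I} \d(u)$ means $u^{-1} \sim_{L,I} \d(u)^{-1}$ inside $W_I$; in particular $u^{-1}$ and $\d(u)^{-1}$ lie in the same left cell $\Lambda$ of $W_I$, hence in the same cell module $[\Lambda]_A$, and (A2) for $\d$ on $W_I$ shows the isomorphism $[\Lambda]_A \to [\d^{-1}\text{-image}]$... more directly: I want to transfer the relation $u^{-1}\sim_{L,I}\d(u)^{-1}$ up to a relation in $W$ between $u^{-1}x^{-1}$ and $\d(u)^{-1}x^{-1}$. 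The natural tool is again induction, but in the form controlling when $vx^{-1}\sim_L v'x^{-1}$ for $v,v' \in W_I$ with $v \sim_{L,I} v'$. This is exactly the sort of statement that should follow from the relative Kazhdan--Lusztig theory of \cite{myind}, \cite{myrel}: the elements $\{vx^{-1} : v \in W_I\}$ with $x^{-1}$ ranging over... one has to be a little careful because $x^{-1}$ need not be a minimal coset representative on the correct side. I would instead argue via the $\bH$-module isomorphism from part (a): apply part (a) (which is now proved) to conclude $w \sim_L$-cells are permuted, but that controls $\sim_L$, not $\sim_R$; so (b) genuinely needs the extra input (A3) together with a compatibility between the left-cellular structure on $X_I W_I$ and the right pre-order, and the cleanest formulation is that $xu \sim_R xu'$ whenever $u \sim_{R,I} u'$ — which should be extracted from the induction-of-cells results of \cite{myind} applied on the right, or deduced from Theorem~\ref{cellind} via (\ref{eq:left-right}) after re-expressing $w^{-1}$ through the $X_I$-decomposition on the other side.

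The main obstacle I expect is \emph{not} part (a) — there the functoriality of $\Ind_I^S$ and Example~\ref{cellind1} do essentially all the work, and the only care needed is the bookkeeping that an $\bH$-module isomorphism respecting standard bases actually transports the cell filtration. The real work is in part (b): relating $\sim_R$ on $W$ to $\sim_{R,I}$ on $W_I$ through the left extension. The asymmetry is the issue — $\d^L$ is built using the \emph{left} coset decomposition $W = X_I W_I$, but the conclusion concerns the \emph{right} pre-order, so one cannot just quote the left-handed induction theorem. I would resolve this by passing to inverses via (\ref{eq:left-right}), rewriting $(xu)^{-1} = u^{-1}x^{-1}$, and then either (i) invoking a right-handed analogue of the relative KL-polynomial setup of \cite{myind} to see that multiplying a fixed $W_I$-cell-equivalence on the right by $x^{-1}$ preserves $\sim_L$ in $W$, or (ii) arguing that $u^{-1}x^{-1} \sim_{LR} \d(u)^{-1}x^{-1}$ is automatic and that the left-cell equality forces the right-cell equality within a fixed two-sided cell using the boundedness/positivity results — but (i) is the honest route and matches the statement ``this essentially relies on the concept of induction of cells'' from the introduction. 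So the proof of (b) should be: reduce to inverses, identify $u^{-1}x^{-1}$ and $\d(u)^{-1}x^{-1}$ as lying in $X_J W_J$-type strata for appropriate $J$, and apply Theorem~\ref{cellind}-style induction on that side to promote $u^{-1}\sim_{L,I}\d(u)^{-1}$ to $u^{-1}x^{-1}\sim_L \d(u)^{-1}x^{-1}$, hence $xu \sim_R x\d(u) = \d^L(xu)$.
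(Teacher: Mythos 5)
Your strategy for part (a) is essentially the paper's: reduce to a left cell $\Gamma'$ of $W_I$ with $\Gamma\subseteq X_I\Gamma'$, use the isomorphism $[X_I\Gamma']_A\cong\Ind_I^S([\Gamma']_A)$ of Example~\ref{cellind1}, apply the induction functor to the $\bH_I$-isomorphism $[\Gamma']_A\cong[\delta(\Gamma')]_A$, and transport back. However, the sentence ``transporting through these identifications yields precisely an $\bH$-module isomorphism sending $e_{xv}\mapsto e_{x\delta(v)}$'' hides the one genuinely nontrivial step. The composite map sends $e_{yv}$ to the element of $[X_I\delta(\Gamma')]_A$ corresponding to $\sum_{x,u}p^*_{xu,yv}\,\bigl(T_x\otimes e_{\delta(u)}\bigr)$, and for this to equal $e_{y\delta(v)}$ one needs the identity $p^*_{x\delta(u),y\delta(v)}=p^*_{xu,yv}$ for all $x,y\in X_I$ and $u,v\in\Gamma'$. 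That identity is exactly \cite[Lemma~3.8]{myrel}, which the paper quotes; it is proved via the uniqueness characterisation of the relative Kazhdan--Lusztig basis and is not a formal consequence of functoriality. Without it you only obtain an abstract $\bH$-module isomorphism $\Ind_I^S([\Gamma']_A)\cong\Ind_I^S([\delta(\Gamma')]_A)$, and an abstract isomorphism does not transport the left-cell partition: that partition is read off from the structure constants $h_{s,y,x}$ in the standard basis, so you must know the isomorphism matches standard basis to standard basis. You correctly observe that passing from a basis-respecting isomorphism to the statement about cells needs bookkeeping (this is \cite[Prop.~3.9]{myrel}, and your left-closedness argument for it is fine), but the real content sits one step earlier, in the invariance of the relative Kazhdan--Lusztig polynomials under $\delta$.

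Part (b) has a genuine gap. You correctly reduce to the statement that $xu\sim_R xu'$ whenever $x\in X_I$ and $u\sim_{R,I}u'$ in $W_I$, but you do not prove it, and neither of your suggested routes works as stated. Theorem~\ref{cellind}, even after passing to inverses via (\ref{eq:left-right}), only lets you \emph{descend} cell relations from $W$ to $W_I$; here you must \emph{lift} a relation from $W_I$ to $W$, and the analogous lifting for the left preorder is false in general ($u\sim_{L,I}u'$ does not imply $xu\sim_L xu'$, since $X_I\Gamma'$ is usually a union of several left cells of $W$). So one cannot ``apply Theorem~\ref{cellind}-style induction on that side''. The right preorder interacts with the decomposition $W=X_IW_I$ in a special way, and the precise statement needed is \cite[Prop.~9.11(b)]{Lusztig03}, which the paper simply cites: condition (A3) together with that proposition immediately gives $w\sim_R\delta^L(w)$. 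The missing ingredient is thus not a rearrangement of the induction theorem but a separate compatibility result of Lusztig for $\leq_R$ and left coset decompositions.
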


\begin{proof}\smartqed (a) By Theorem~\ref{cellind}, there is a left cell 
$\Gamma'$ of $W_I$ such that $\Gamma\subseteq X_I\Gamma'$. By condition (A1) 
in Definition~\ref{mydef1}, the set $\Gamma_1':=\delta(\Gamma')$ also is a 
left cell of $W_I$ and, by condition (A2), the map $\delta$ induces an
$\bH_I$-module isomorphism $[\Gamma']_A\cong [\Gamma_1']_A$. By 
Example~\ref{cellind1}, the subsets $X_I\Gamma'$ and $X_I\Gamma_1'$
of $W$ are left-closed and, hence, we have corresponding $\bH$-modules 
$[X_I\Gamma']_A$ and $[X_I\Gamma_1']_A$. These two $\bH$-modules are 
isomorphic to the induced modules $\Ind_I^S([\Gamma'])$ and 
$\Ind_I^S([\Gamma_1'])$, respectively, where explicit isomorphisms are 
given by the formula in Example~\ref{cellind1}. Now, by
\cite[Lemma~3.8]{myrel}, we have
\[ p_{xu,yv}^*=p_{xu_1,yv_1}^* \qquad \mbox{for all $x,y\in X_I$ and
$u,v\in \Gamma'$},\]
where we set $u_1=\delta(u)$ and $v_1=\delta(v)$ for $u,v\in\Gamma'$. 
By \cite[Prop.~3.9]{myrel}, this implies that $\d^L$ maps the 
partition of $X_I\Gamma'$ into left cells of $W$ onto the analogous 
partition of $X_I\Gamma_1'$. In particular, since $\Gamma\subseteq 
X_I\Gamma'$, the set $\d^L(\Gamma) \subseteq X_I\Gamma_1'$ 
is a left cell of $W$; furthermore, \cite[Prop.~3.9]{myrel} also shows
that $\d^L$ induces an $\bH$-module isomorphism
$[\Gamma]_A\cong [\d^L(\Gamma)]_A$. 

(b) Since condition (A3) in Definition~\ref{mydef1} is assumed to hold, this
is just a restatement of \cite[Prop.~9.11(b)]{Lusztig03}.
\qed\end{proof}

We will now give examples in which $|I|=2$. Let us first fix some notation. 
If $s$, $t \in S$ are such that $s\neq t$ and $st$ has finite order, let 
$w_{s,t}$ denote the longest element of $W_{s,t}=\langle s,t \rangle$ and let
\begin{align*}
\G_s^{s,t}&=\{w \in W_{s,t}~|~\ell(ws) < \ell(w)~\text{and}~\ell(wt) > 
\ell(w)\}, \\
\G_t^{s,t}&=\{w \in W_{s,t}~|~\ell(ws) > \ell(w)~\text{and}~\ell(wt) < 
\ell(w)\}.
\end{align*}

\begin{exmp}[Dihedral groups with equal parameters]
\label{ex:eq}
Let $s$, $t \in S$ be such that $p_s=p_t$ and $s \neq t$. It follows 
from~\cite[\S{8.7}]{Lusztig03} that $\{1\}$, $\{w_{s,t}\}$, $\G_s^{s,t}$ 
and $\G_t^{s,t}$ are the left cells of $W_{s,t}$. Let $\s_{s,t}$ be the 
unique group automorphism of $W_{s,t}$ which exchanges $s$ and $t$. 
Now, let $\d_{s,t}$ denote the map $W_{s,t} \mapsto W_{s,t}$ 
defined by 
$$\d_{s,t}(w)=
\left\{\begin{array}{lcl}
w && \text{if $w \in \{1,w_{s,t}\}$,}\\
\s_{s,t}(w) w_{s,t} &~& \text{otherwise.} \\
\end{array}\right.$$
Then, by~\cite[Lemma~7.2~and~Prop.~7.3]{Lusztig03}, the pair $(\{s,t\},
\d_{s,t})$ is strongly KL-admissible. Therefore, by Theorem~\ref{myprop}, 
\begin{center}
$\d_{s,t}^L\colon W\rightarrow W$ is a left cellular map.
\end{center} 
In particular, this means:
\equat\label{eq:star}
x \sim_L y ~\text{\it if and only if}~\d_{s,t}^L(x) \sim_L \d_{s,t}^L(y)
\endequat
for all $x$, $y \in W$. Note also the following facts:
\begin{itemize}
\item[$\bullet$] If $st$ has odd order, then $\d_{s,t}$ exchanges the left 
cells $\G_s^{s,t}$ and $\G_t^{s,t}$.

\item[$\bullet$] If $st$ has even order, then $\d_{s,t}$ stabilizes the 
left cells $\G_s^{s,t}$ and $\G_t^{s,t}$. 
\end{itemize}
For example, if $st$ has order $3$, then $\G_s^{s,t}=\{s,ts\}$ 
and $\G_t^{s,t}=\{t,st\}$; furthermore, $\d_{s,t}(s) =st$ and $\d_{s,t}
(ts)=t$. The matrix representation afforded by $[\G_s^{s,t}]_A$ with 
respect to the basis $(e_{s}, e_{ts})$ is given by:
\[ C_{s}' \mapsto \left[\begin{array}{cc} v^{p_s}+v^{-p_s} & 1 \\ 0 & 0
\end{array}\right],\qquad C_{t}' \mapsto \left[\begin{array}{cc} 0 & 0
\\ 1 & v^{p_t}+v^{-p_t} \end{array}\right] \qquad (p_s=p_t). \]
The fact that $\d_{s,t}$ is left cellular just means that we obtain exactly 
the same matrices when we consider the matrix representation afforded 
by $[\G_{s,t}^t]_A$ with respect to the basis $(e_{st},e_{t})$.
\end{exmp}

Let us explicitly relate the above discussion to the $*$-operations in 
Proposition~\ref{starop} and the extension in Proposition~\ref{lu107}. 
In particular, this yields new proofs of these two propositions and 
shows that they also hold in the unequal parameter case, without any 
further assumptions, as long as $p_s=p_t$. (Partial results in this 
direction are obtained in \cite[Cor.~3.5(4)]{shi1}.) 

\begin{cor} \label{star1} Let $s,t\in S$ be such that $st$ has finite 
order $\geq 3$ and assume that $p_s=p_t$. Then, with the notation in 
Remark~\ref{remstrings}, we obtain a left cellular map $\delta\colon W
\rightarrow W$ by setting
\[\delta(w)=\left\{\begin{array}{cl} \tilde{w} & \quad \mbox{if $w\in 
\DC_R(s,t)$}, \\ w & \quad \mbox{otherwise}.\end{array}\right.\]
(If $st$ has order $3$, then this coincides with the map defined in 
Proposition~\ref{starop}.)
\end{cor}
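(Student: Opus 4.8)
The plan is to deduce Corollary~\ref{star1} from Example~\ref{ex:eq} by showing that the global map $\delta$ described in the corollary is, up to an irrelevant relabelling, nothing but the left extension $\d_{s,t}^L$ of the dihedral map $\d_{s,t}$. The key point is that the string $\lambda_w$ through an element $w\in\DC_R(s,t)$ is intrinsically tied to the $W_{s,t}$-coset $wW_{s,t}$, and that passing from $w$ to $\tilde w$ only changes the $W_{s,t}$-component of $w$ while leaving its distinguished coset representative $x\in X_{s,t}$ fixed. So first I would write $w=xu$ with $x\in X_I$, $I=\{s,t\}$, and $u\in W_I$; one checks from the description of cosets in Remark~\ref{remstrings} that $w\in\DC_R(s,t)$ if and only if $u$ lies in one of the two strings of $W_I$ itself, i.e. $u\notin\{1,w_{s,t}\}$, which is exactly the set on which $\d_{s,t}$ acts non-trivially. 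Thus $\DC_R(s,t)=X_I\bigl(W_I\setminus\{1,w_{s,t}\}\bigr)$, and the complement of $\DC_R(s,t)$ is $X_I\{1,w_{s,t}\}$, on which both $\delta$ and $\d_{s,t}^L$ act as the identity.

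Next I would verify that on $\DC_R(s,t)$ the two maps agree, i.e. that $\widetilde{xu}=x\,\d_{s,t}(u)$ for $x\in X_I$ and $u\in W_I\setminus\{1,w_{s,t}\}$. Since $\ell(xu)=\ell(x)+\ell(u)$ and, more precisely, $\ell(xu')=\ell(x)+\ell(u')$ for every $u'\in W_I$, the order in which the elements of the coset $xW_I$ appear in its two strings is precisely the order of the elements of the two strings of $W_I$; hence the $i$-th element of $\lambda_{xu}$ is $x\cdot(\text{$i$-th element of the corresponding string of }W_I)$. Therefore $\widetilde{xu}=x\tilde u$, where $\tilde u$ is the involution of Remark~\ref{remstrings} applied inside $W_I$. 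It then remains to check that $\tilde u=\d_{s,t}(u)$ for $u\in W_I\setminus\{1,w_{s,t}\}$: if $m$ is the order of $st$, the element $w_{s,t}$ has the effect, by conjugation and multiplication, of reversing each string, and a direct check (using that $\s_{s,t}$ swaps $s,t$ and that $\s_{s,t}(u)w_{s,t}$ is the length-$(m-\ell(u))$ element in the same coset) shows $\s_{s,t}(u)w_{s,t}$ is the $(m-i)$-th element of the string containing $u$ when $u$ is the $i$-th element. This identifies $\d_{s,t}|_{W_I\setminus\{1,w_{s,t}\}}$ with $u\mapsto\tilde u$, and hence $\delta=\d_{s,t}^L$ on all of $W$.

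With the identification $\delta=\d_{s,t}^L$ in hand, the corollary is immediate: Example~\ref{ex:eq} (via Theorem~\ref{myprop}) tells us precisely that $\d_{s,t}^L$ is a left cellular map for $W$, which is the assertion of Corollary~\ref{star1}. The final parenthetical claim, that for $m=3$ this recovers the map of Proposition~\ref{starop}, follows from the remark at the end of Remark~\ref{remstrings} that $\tilde w=w^*$ when $m=3$, together with the sample computation $\d_{s,t}(s)=st$, $\d_{s,t}(ts)=t$ in Example~\ref{ex:eq}, which matches $s^*=st$, $(ts)^*=t$.

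The main obstacle I anticipate is purely bookkeeping rather than conceptual: one must set up the labelling of the two strings of a coset $xW_{s,t}$ consistently with the labelling inside $W_{s,t}$, and then carefully match the combinatorial description of $\tilde u$ (``$(m-i)$-th element of $\lambda_u$'') with the algebraic formula $u\mapsto\s_{s,t}(u)w_{s,t}$ in Example~\ref{ex:eq}, distinguishing the parity of $m$ (for odd $m$, $\s_{s,t}$ genuinely swaps the two strings $\G_s^{s,t}$ and $\G_t^{s,t}$; for even $m$ it fixes each of them). Once this dictionary between the ``strings'' language of \cite[\S10]{Lu1} and the ``dihedral automorphism'' language of \cite[\S7]{Lusztig03} is nailed down, everything else is a direct appeal to the results already established above.
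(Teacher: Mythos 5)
Your proposal is correct and follows exactly the paper's route: the paper's own proof consists precisely of the observation that $\delta_{s,t}^L(w)=\tilde{w}$ for $w\in\DC_R(s,t)$, so that the corollary is a restatement of Example~\ref{ex:eq}, plus the remark that $\tilde{w}=w^*$ when $st$ has order~$3$. The only difference is that you spell out the coset/string bookkeeping behind the identification $\delta=\delta_{s,t}^L$, which the paper dispatches with ``just note that''; your verification of that step is sound.
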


\begin{proof} \smartqed Just note that, if $w \in \DC_R(s,t)$, then 
$\delta_{s,t}^L(w)=\tilde{w}$. Thus, the assertion simply is a restatement 
of the results in Example~\ref{ex:eq}. Furthermore, if $st$ has order $3$, 
then $\tilde{w}=w^*$ for all $w \in \DC_R(s,t)$, as noted
in Remark~\ref{remstrings}.
\qed\end{proof}

\begin{exmp}[Dihedral groups with unequal parameters]
\label{ex:uneq} Let $s, t \in S$ be such that $st$ has \emph{even} order 
$\ge 4$ and that $p_s < p_t$. Then it follows from~\cite[\S{8.7}]{Lusztig03} 
that $\{1\}$, $\{w_{s,t}\}$, $\{s\}$, $\{w_{s,t}s\}$, $\G_s^{s,t}
\setminus\{s\}$ and $\G_t^{s,t}\setminus\{w_{s,t}s\}$ are the left cells 
of $W_{s,t}$. Now, let $\d_{s,t}$ denote the map $W_{s,t} \mapsto W_{s,t}$ 
defined by
$$\d_{s,t}^<(w)=
\left\{\begin{array}{lcl}
w && \text{if $w \in \{1,w_{s,t},s,w_{s,t}s\}$,}\\
ws &~& \text{otherwise.} \\
\end{array}\right.$$
Then, again by~\cite[Lemma~7.5~and~Prop.~7.6]{Lusztig03} (or 
\cite[Exc.~11.4]{gepf}), the pair $(\{s,t\},\d_{s,t}^<)$ is strongly 
KL-admissible. Therefore, again by Theorem~\ref{myprop}, $\d_{s,t}^{<,L}
\colon W\rightarrow W$ is a left cellular map. In particular, this means: 
\equat\label{eq:star-uneq}
x \sim_L y ~\text{\it if and only if}~\d_{s,t}^{<,L}(x) \sim_L 
\d_{s,t}^{<,L}(y)
\endequat
for all $x$, $y \in W$. Note also that $\d_{s,t}^<$ exchanges the left cells 
$\G_s^{s,t}\setminus\{s\}$ and $\G_t^{s,t}\setminus\{w_{s,t}s\}$ while it 
stabilizes all other left cells in $W_{s,t}$. 

For example, if $st$ has order $4$, then $\Gamma_1:=\G_s^{s,t}\setminus
\{s\}=\{ts,sts\}$ and $\Gamma_2:=\G_t^{s,t} \setminus\{w_{s,t}s\}=\{t,st\}$;
furthermore, $\d_{s,t}^{<}(ts)=t$ and $\d_{s,t}^{<}(sts)=st$. As before,
the fact that $\delta_{s,t}^{<}$ is left cellular just means that the matrix 
representation afforded by $[\Gamma_1]_A$ with respect to the basis 
$(e_{ts}, e_{sts})$ is exactly the same as the matrix representation 
afforded by $[\Gamma_2]_A$ with respect to the basis $(e_{t}, e_{st})$.
\end{exmp}

The next example shows that left extensions from dihedral subgroups are, in
general, not enough to describe all left cellular maps.


\begin{exmp}\label{ex:non-dihedral} Let $W$ be a Coxeter group of type 
$B_r$ ($r \geq 2$), with diagram and weight function as follows:
\begin{center}
\begin{picture}(250,18)
\put( 10,6){$B_r$}
\put( 50,6){\circle*{5}}
\put( 48,13){$b$}
\put( 50,6){\line(1,0){20}}
\put( 58,9){$\scriptstyle{4}$}
\put( 70,6){\circle*{5}}
\put( 68,13){$a$}
\put( 70,6){\line(1,0){30}}
\put( 90,6){\circle*{5}}
\put( 88,13){$a$}
\put(110,6){\circle*{1}}
\put(120,6){\circle*{1}}
\put(130,6){\circle*{1}}
\put(140,6){\line(1,0){10}}
\put(150,6){\circle*{5}}
\put(147,13){$a$}
\put(185,5){$b>(r-1)a>0$}
\end{picture}
\end{center}
This is the {\bfit asymptotic case} studied by Iancu and the first-named
author \cite{bo2}, \cite{BI}. In this case, the left, right and two-sided 
cells are described in terms of a Robinson--Schensted correspondence for 
bi-tableaux. Using results from [{\em loc.\ cit.}], it is shown in 
\cite[Theorem~6.3]{myrel} that the following hold:
\begin{itemize}
\item[(a)] If $\G_1$ and $\G_2$ are two left cells contained in the 
same two-sided cell, then there exists a bijection $\d : \G_1 \longiso \G_2$ 
which induces an isomorphism of $\bH$-modules $[\G_1]_A \longiso [\G_2]_A$. 
\item[(b)] The bijection $\d$ in (a) is uniquely determined by the 
condition that $w,\delta(w)$ lie in the same right cell.
\end{itemize}
However, one can check that, for $r \in \{3,4,5\}$, the map $\d$ does not
always arise from a left extension of a suitable left cellular map of a
dihedral subgroup of $W$. It is probable that this observation holds for 
any $r \ge 3$.
\end{exmp}

\begin{exmp} \label{affine} Let $W$ be an affine Weyl group and $W_0$
be the finite Weyl group associated with $W$. Then there is a well-defined
``lowest'' two-sided cell, which consists of precicely $|W_0|$ left
cells; see Guilhot \cite{guil} and the references there. It is likely
that these $|W_0|$ left cells are all related by suitable left cellular
maps.
\end{exmp}

\section{An extension of the generalised $\tau$-invariant} \label{sec6}


\begin{svgraybox}{{\bf Notation.} {\it We fix in this section a set $\D$ of 
KL-admissible pairs, as well as a surjective map $\r : W \to E$ (where 
$E$ is a fixed set) such that the fibers of $\r$ are unions of left cells. 
We then denote by $\VC_\D$ the group of permutations of $W$ 
generated by the family $(\d^L)_{(I,\d) \in \D}$.}}
\end{svgraybox}

Note that giving a surjective map $\r$ as above is equivalent to giving 
an equivalence relation on $W$ which is coarser than $\sim_L$. 

\medskip

Then, each $w \in W$ defines a map $\tau^{\D,\r}_w : \VC_\D \longto E$ 
as follows:
$$\tau_w^{\D,\r}(\s)=\r(\s(w))\qquad \mbox{for all $\s \in \VC_\D$}.$$

\begin{defn}\label{def:tau}
Let $x$, $y \in W$. We say that $x$ and $y$ have the same {\bfit  
$\tau^{\D,\r}$-invariant} if $\tau_x^{\D,\r}=\tau_y^{\D,\r}$ (as maps 
from $\VC_\D$ to $E$). The equivalence classes for this relation are 
called the {\bfit left Vogan ${\boldsymbol{(\D,\r)}}$-classes}.
\end{defn}

An immediate consequence of Theorem~\ref{myprop} is the following:

\begin{thm}\label{theo:vogan}
Let $x$, $y \in W$ be such that $x \sim_L y$. Then $x$ and $y$ have the same 
$\tau^{\D,\r}$-invariant.
\end{thm}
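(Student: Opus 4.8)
The plan is to reduce the statement to the definitions together with Theorem~\ref{myprop}. Fix $x,y\in W$ with $x\sim_L y$. By Definition~\ref{def:tau} I must show $\tau_x^{\D,\r}=\tau_y^{\D,\r}$ as maps $\VC_\D\to E$, i.e.\ $\r(\s(x))=\r(\s(y))$ for every $\s\in\VC_\D$. Since the fibers of $\r$ are unions of left cells, it suffices to prove that $\s(x)\sim_L\s(y)$ for all $\s\in\VC_\D$. Thus the whole statement comes down to: every element $\s$ of the group $\VC_\D$ preserves the relation $\sim_L$.

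First I would record the key point that each generator $\d^L$ (for $(I,\d)\in\D$) is a left cellular map on $W$: this is exactly Theorem~\ref{myprop}(a), since every pair in $\D$ is KL-admissible by the standing notation of this section. A left cellular map $\d^L$ sends each left cell $\Gamma$ bijectively onto a left cell $\d^L(\Gamma)$ (condition (A1)); in particular, if $x\sim_L y$, then $x,y$ lie in a common left cell $\Gamma$, hence $\d^L(x),\d^L(y)$ lie in the common left cell $\d^L(\Gamma)$, so $\d^L(x)\sim_L\d^L(y)$. The same applies to the inverse permutation $(\d^L)^{-1}$: since $\d^L$ permutes the set of left cells, its inverse does too, so $(\d^L)^{-1}$ also preserves $\sim_L$. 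Therefore each generator of $\VC_\D$ and its inverse preserve $\sim_L$.

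Now I would argue by induction on word length in the generators. Any $\s\in\VC_\D$ can be written as $\s=\s_k\cdots\s_1$ where each $\s_i$ is one of the maps $\d^L$ or its inverse. Applying the previous paragraph repeatedly, $x\sim_L y$ implies $\s_1(x)\sim_L\s_1(y)$, then $\s_2(\s_1(x))\sim_L\s_2(\s_1(y))$, and so on, giving $\s(x)\sim_L\s(y)$. Hence $\r(\s(x))=\r(\s(y))$ for all $\s\in\VC_\D$, which is precisely $\tau_x^{\D,\r}=\tau_y^{\D,\r}$, i.e.\ $x$ and $y$ have the same $\tau^{\D,\r}$-invariant.

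There is no real obstacle here: the content has been front-loaded into Theorem~\ref{myprop} and the setup of the section. The only mild subtlety worth making explicit is the passage from "$\d^L$ is left cellular" to "$(\d^L)^{-1}$ preserves $\sim_L$", which I would justify by noting that a left cellular map induces a permutation of the (finite-to-describe) set of left cells, so its inverse does as well; and that condition (A2) plays no role in this particular statement — only (A1) is needed, the isomorphism of $\bH$-modules being relevant for finer invariants but not for the plain assertion about $\sim_L$. One could also simply invoke \eqref{eq:star}-type equivalences, but the group-theoretic formulation via $\VC_\D$ is cleaner.
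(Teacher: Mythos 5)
Your proof is correct and is essentially the paper's argument: the paper simply states the theorem as ``an immediate consequence of Theorem~\ref{myprop}'', and your spelling-out (each generator $\d^L$ and its inverse preserves $\sim_L$ by left cellularity, hence so does every $\s\in\VC_\D$, and the fibers of $\r$ are unions of left cells) is exactly the intended reasoning.
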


\begin{rem}\label{rem:tau} There is an equivalent formulation of 
Definition~\ref{def:tau} which is more in the spirit of Vogan's
Definition~\ref{defvog}. We define by induction on $n$ a family of 
equivalence relations $\approx_n^{\D,\r}$ on $W$ as follows. Let $x, y 
\in W$. 
\begin{itemize}
\item[$\bullet$] For $n=0$, we write $x \approx_0^{\D,\r} y$ if $\r(x)=\r(y)$.

\item[$\bullet$] For $n\geq 1$, we write $x \approx_n^{\D,\r} y$ if 
$x \approx_{n-1}^{\D,\r} y$ and $\d^L(x) \approx_{n-1}^{\D,\r} \d^L(y)$ for 
all $(I,\d) \in \D$.
\end{itemize}
Note that the relation $\approx_n^{\D,\r}$ is finer than 
$\approx_{n-1}^{\D,\r}$. It follows from the definition that $x,y$ 
have the same $\tau^{\D,\r}$-invariant if and only if $x \approx_n^{\D,\r} 
y$ for all $n \ge 0$.

This inductive definition is less easy to write than 
Definition~\ref{def:tau}, but it is more efficient for computational 
purpose. Indeed, if one finds an $n_0$ such that the relations 
$\approx_{n_0}^{\D,\r}$ and $\approx_{n_0+1}^{\D,\r}$ coincide, then 
$x$ and $y$ have the same $\tau^{\D,\r}$-invariant precisely when 
$x \approx_{n_0}^{\D,\r} y$. 
Note that such an $n_0$ always exists if $W$ is finite. Also, 
even in small Coxeter groups, the group $\VC_\D$ can become enormous 
(see Example~\ref{ex:h4} below) while $n_0$ is reasonably small and 
the relation $\approx_{n_0}^{\D,\r}$ can be computed quickly.
\end{rem}

\begin{exmp}[Enhanced right descent set]\label{ex:enhanced}
One could take for $\r$ the map $\RC : W \to \PC(S)$ (power set of $S$); 
see Proposition~\ref{klright}. Assuming that we are in the equal parameter
case, we then obtain exactly the invariant in Remark~\ref{rem11}. In the 
unequal parameter case, we can somewhat refine this, as follows. Let
$$S^p = S \cup \{sts~|~s,t \in S\text{ such that }p_s < p_t\}$$
and, for $w \in W$, let
$$\RC^p(w)=\{s \in S^p~|~\ell(ws) < \ell(w)\} \subseteq S^p.$$
Then it follows from the description of left cells of $W_{s,t}$ in 
Example~\ref{ex:uneq} and from Theorem~\ref{cellind} (by using the same 
argument as for the proof of Proposition~\ref{klright} given 
in~\S\ref{sec:parabolic}) that 
$$\text{\it if $x \leq_L y$, then $\RC^p(y) \subseteq \RC^p(x)$.}$$
In particular,
$$\text{\it if $x \sim_L y$, then $\RC^p(x) = \RC^p(y)$.}$$
So one could take for $\r$ the map $\RC^p : W \to \PC(S^p)$.
\end{exmp}

Let $\D_2$ be the set of all pairs $(I,\d)$ such that $I=\{s,t\}$
with $s\neq t$ and $p_s \le p_t$; furthermore, if $p_s=p_t$, then 
$\d = \d_{s,t}$ (as defined in Example~\ref{ex:eq}) while, if $p_s < p_t$, 
then $\d=\d_{s,t}^<$ (as defined in Example~\ref{ex:uneq}). Then the pairs 
in $\D_2$ are all strongly KL-admissible. With the notation in 
Example~\ref{ex:enhanced}, we propose the following conjecture:

\begin{conj}\label{myconj}
Let $x$, $y \in W$. Then $x \sim_L y$ if and only if $x \sim_{LR} y$ and 
$x,y$ have the same $\t^{\D_2,\RC^p}$-invariant.
\end{conj}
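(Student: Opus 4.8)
The ``only if'' direction is already established: by Theorem~\ref{theo:vogan} applied with $\D=\D_2$ and $\r=\RC^p$, if $x\sim_L y$ then $x$ and $y$ have the same $\t^{\D_2,\RC^p}$-invariant, and trivially $x\sim_L y$ implies $x\sim_{LR}y$. So the entire content is the ``if'' direction, and I do not expect to be able to prove it in full generality; rather, the plan is to reduce it to known cases and verify it where the cell structure is available. First I would record the general reduction: by definition $x\sim_{LR}y$ means $x$ and $y$ lie in a common two-sided cell $\mathbf{c}$; since each $\d^L_{s,t}$ and $\d^{<,L}_{s,t}$ preserves $\sim_{LR}$ (this follows from strong KL-admissibility and Theorem~\ref{myprop}(b), which gives $w\sim_R\d^L(w)$, hence $w\sim_{LR}\d^L(w)$), the orbit of $x$ under $\VC_{\D_2}$ stays inside $\mathbf{c}$, and so the $\t^{\D_2,\RC^p}$-invariant only distinguishes left cells \emph{within} a fixed two-sided cell. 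Thus the conjecture is equivalent to: for each two-sided cell $\mathbf{c}$, the left cells inside $\mathbf{c}$ are separated by the $\t^{\D_2,\RC^p}$-invariant.

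Next I would treat the equal-parameter case as the model. There, $\RC^p=\RC$ and $\d_{s,t}$ recovers Vogan's $\tilde{w}$-operation (Corollary~\ref{star1}), so the $\t^{\D_2,\RC}$-invariant refines the generalised $\tau$-invariant of Definition~\ref{defvog}; hence the conjecture in this case follows from, and is essentially equivalent to, the assertion that the generalised $\tau$-invariant (together with the two-sided cell) separates left cells. This is \emph{known} to fail for type $D_n$, $n\ge 6$ (as noted after Theorem~\ref{vogan1}), so the conjecture cannot be literally true as stated using $\RC$ alone~--- which is exactly why the enhanced descent set $\RC^p$ and the full group $\VC_{\D_2}$ (rather than just the inductive relations $\approx_n$) are needed. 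Therefore the real plan is: (i) verify the conjecture computationally for all finite Coxeter groups of small rank and all choices of weights, using the inductive relations $\approx^{\D_2,\RC^p}_n$ of Remark~\ref{rem:tau} and the stabilisation index $n_0$; (ii) prove it for dihedral groups directly from the explicit left-cell description in Examples~\ref{ex:eq} and~\ref{ex:uneq}, where one checks by hand that distinct left cells in a common two-sided cell already differ in $\RC^p$ or are swapped by some $\d^L_{s,t}$; (iii) for type $B_r$ in the asymptotic case of Example~\ref{ex:non-dihedral}, use the Robinson--Schensted description of cells and the bijection $\d$ of that example: one must show the $\VC_{\D_2}$-orbits realise enough of these bijections to connect all left cells in a two-sided cell, or at least that $\RC^p$ plus the dihedral moves suffice; (iv) for affine Weyl groups, reduce to the lowest two-sided cell (Example~\ref{affine}) and to finite parabolic quotients via Theorem~\ref{cellind}.

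The main obstacle is step (iii) and its analogues: a two-sided cell may contain left cells that are \emph{not} related by any left extension of a dihedral left cellular map~--- precisely the phenomenon flagged in Example~\ref{ex:non-dihedral}. So the hard part is showing that even when the individual generators $\d^L_{s,t},\d^{<,L}_{s,t}$ do not directly match two given left cells $\G_1,\G_2\subseteq\mathbf{c}$, some \emph{composite} element of $\VC_{\D_2}$ does, or that $\RC^p$ already separates the ``missing'' pairs. I would attack this by analysing how products of dihedral operations act on the Robinson--Schensted data (tracking the effect on pairs of tableaux, in the spirit of Garfinkle's analysis in type $B_n$ \cite{gar3}), and by using Theorem~\ref{cellind} to restrict to rank-$2$ and rank-$3$ parabolic subgroups where the orbits can be computed explicitly. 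A secondary difficulty is purely structural: one must rule out that two left cells in a common two-sided cell have literally identical $\RC^p$ and identical $\VC_{\D_2}$-orbit-labellings~--- for infinite $W$ this needs an argument that the invariant is genuinely ``complete enough'', which at present I would only be able to supply conjecturally, backed by the finite-rank computations of step (i). In short, the plan proves the ``only if'' direction rigorously and reduces ``if'' to a family of explicit verifications, with the genuine open difficulty being non-dihedral two-sided cells in large rank.
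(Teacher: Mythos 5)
The statement you are trying to prove is labelled a \emph{conjecture} in the paper: the authors give no proof of the ``if'' direction, only computational evidence ($F_4$ and $B_n$ for $n\le 7$ with all weights, via {\sf PyCox}) together with the remark that in the equal parameter case for finite $W$ it is known except possibly in types $B_n$ and $D_n$. Your ``only if'' direction is correct and is exactly the paper's observation: it is immediate from Theorem~\ref{theo:vogan} plus the triviality $\sim_L\,\Rightarrow\,\sim_{LR}$. The remainder of your text is a research programme rather than a proof, which is consistent with the actual status of the statement; so there is no ``paper proof'' to compare your hard direction against.

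One genuine error should be corrected, however. You assert that in the equal parameter case the conjecture ``follows from, and is essentially equivalent to'' the statement that Vogan's generalised $\tau$-invariant (Definition~\ref{defvog}) separates left cells within a two-sided cell, and you then invoke the $D_n$ ($n\ge 6$) counterexample to conclude the conjecture ``cannot be literally true as stated using $\RC$ alone''. This conflates two different invariants. Vogan's invariant is built from the multi-valued operators $\cT_{s,t}$ with an ``either\dots or'' matching condition and only uses $s,t$ with $st$ of order $3$ or $4$; the $\t^{\D_2,\RC^p}$-invariant is built from the \emph{group} $\VC_{\D_2}$ generated by the single-valued left cellular maps $\d_{s,t}^L$ (all finite orders $\ge 3$), and records $\r(\s(w))$ for \emph{every} $\s$ in that group --- a potentially far finer invariant (cf.\ the size of $\VC_{\D_2}$ in Example~\ref{ex:h4}). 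The $D_n$ counterexample refutes completeness of the former, not the latter, so it does not bear on Conjecture~\ref{myconj}. Relatedly, your parenthetical suggesting the ``full group $\VC_{\D_2}$ rather than just the inductive relations $\approx_n$'' is needed misreads Remark~\ref{rem:tau}: the relations $\approx_n^{\D,\r}$ for all $n$ are \emph{equivalent} to equality of $\t^{\D,\r}$-invariants, not a weakening of it. With these corrections, your proposal amounts to: the easy direction is proved, and the converse remains open --- which is precisely where the paper leaves it.
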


If $W$ is finite and we are in the equal parameter case, then 
Conjecture~\ref{myconj} is known to hold except possibly in type $B_n,D_n$;
see the remarks at the end of \cite[\S 6]{geha}. We have checked that
the conjecture also holds for $F_4$, $B_n$ ($n\leq 7$) and all 
possible weights, using {\sf PyCox} \cite{pycox}. 

By considering collections $\Delta$ with subsets $I\subseteq S$ of 
size bigger than~$2$, one can obtain further refinements of the above 
invariants. In particular, it is likely that the results 
of~\cite{bo2},~\cite{BI} can be interpreted in terms of generalised 
$\t^{\Delta,\rho}$-invariants for suitable $\Delta,\rho$. This will be
discussed elsewhere.

\begin{exmp}\label{ex:h4} Let $(W,S)$ be of type $H_4$. Then it can be 
checked by using computer computations in {\tt GAP} that 
$$|\VC_{\D_2}|=2^{40}\cdot 3^{20} \cdot 5^8 \cdot 7^4 \cdot 11^2.$$
On the other hand, the computation of left Vogan $(\D_2,\RC^p)$-classes using 
the alternative definition given in Remark~\ref{rem:tau} takes only a few 
minutes on a standard computer.
\end{exmp}

\begin{acknowledgement}
The first author is partly supported by the ANR (Project No.\
ANR-12-JS01-0003-01 ACORT). The second author is partly supported
by the DFG (Grant No.\ GE 764/2-1).
\end{acknowledgement}


\begin{thebibliography}{99}
\bibitem{BV2}
{D. Barbasch and D. Vogan}, Primitive ideals and orbital integrals
in complex exceptional groups, J. Algebra {\bf 80} (1983), 350--382.

\bibitem{bo2}
{C. Bonnaf\'e}, Two-sided cells in type $B$ (asymptotic
case),  J. Algebra {\bf 304}, 216--236 (2006).

\bibitem{bo3}
{C.~Bonnaf\'e}, Semicontinuity properties of Kazhdan-Lusztig cells, 
New Zealand J. Math. {\bf 39} (2009), 171--192.

\bibitem{bonnafe b} 
{C. Bonnaf\'e}, On Kazhdan-Lusztig cells in type $B$, 
Journal of Alg. Comb. {\bf 31} (2010), 53--82.

\bibitem{BI}
{C.~Bonnaf\'e and L.~Iancu}, Left cells in type $B_n$ with unequal
parameters, Represent. Theory {\bf 7} (2003), 587--609.

\bibitem{EW}
{B. Elias and G. Williamson}, The Hodge theory of Soergel bimodules, 
Annals of Math. {\bf 180} (2014), 1089-1136.

\bibitem{gar1}
{D.~Garfinkle}, On the classification of primitive ideals for complex
classical Lie algebras I, Compositio Math., {\bf 75} (1990), 135--169.


\bibitem{gar3}
{D.~Garfinkle}, On the classification of primitive ideals for complex
classical Lie algebras III, Compositio Math. {\bf 88} (1993), 187--234.

\bibitem{myind}
{M.~Geck}, On the induction of Kazhdan--Lusztig cells, Bull. London
Math. Soc. {\bf 35} (2003), 608--614.

\bibitem{myrel}
{M. Geck}, Relative Kazhdan--Lusztig cells, Represent. Theory
{\bf 10} (2006), 481--524.


\bibitem{pycox} 
{M. Geck}, PyCox: Computing with (finite) Coxeter groups and 
Iwahori--Hecke algebras. Dedicated to the Memory of Prof. H. Pahlings. 
LMS J. of Comput.  and Math. {\bf 15} (2012), 231--256. (Programs available
at \url{http://www.mathematik.uni-stuttgart.de/$\sim$geckmf}.)

\bibitem{geha}
{M. Geck and A. Halls}, On the Kazhdan--Lusztig cells in type $E_8$, 
Math. Comp., to appear; preprint at {\tt arXiv:1401.6804}.

\bibitem{gepf}
{M. Geck and G. Pfeiffer}, \emph{Characters of Finite Coxeter Groups and
Iwahori-Hecke Algebras.} London Mathematical Society Monographs. New Series,
21. The Clarendon Press, Oxford University Press, New York, 2000.

\bibitem{guil}
{J. Guilhot}, On the lowest two-sided cell in affine Weyl groups,
Represent. Theory {\bf 12} (2008), 327--345. 

\bibitem{jos}
{A.~Joseph}, On the classification of primitive ideals in the 
enveloping algebra of a semisimple Lie algebra, {\em Lie Group
Representations, I} (eds R.~L. R.~Herb and J.~Rosenberg), Lecture Notes
in Mathematics 1024 (Springer, Berlin, 1983), pp.~30--76.

\bibitem{KL} 
{D. Kazhdan and G. Lusztig}, Representations of Coxeter groups and 
Hecke algebras.  Invent. Math. {\bf 53} (1979), 165--184.

\bibitem{Lusztig83}
{G.~Lusztig}, Left cells in {W}eyl groups, {\em Lie Group
Representations, I} (eds R.~L. R.~Herb and J.~Rosenberg), Lecture Notes
in Mathematics 1024 (Springer, Berlin, 1983), pp.~99--111.

\bibitem{Lu1}
{G.~Lusztig}, Cells in affine Weyl groups, Advanced Studies in Pure
Math. {\bf 6}, Algebraic groups and related topics, Kinokuniya and
North--Holland, 1985, 255--287.

\bibitem{LuICM}
{G. Lusztig}, Intersection cohomology methods in representation
theory, {\em Proceedings of the ICM, Kyoto, Japan, 1990}, 
Springer--Verlag, Berlin--New York, 1991, pp.~155--174.

\bibitem{Lusztig03}
{G.~Lusztig}, \emph{Hecke algebras with unequal parameters}. CRM 
Monograph Series {\bf 18}, Amer. Math. Soc., Providence, RI,  2003.

\bibitem{mcg}
{W. McGovern}, Left cells and domino tableaux in classical Weyl groups,
Compositio. Math. {\bf 101} (1996), 77--98. 

\bibitem{shi}
{J. Shi}, Left cells in the affine Weyl groups, T\^ohoku J. Math. 
{\bf 46} (1994), 105--124.

\bibitem{shi1}
{J. Shi}, The Laurent polynomials $M_{y,w}^s$ in the Hecke algebra with
unequal parameters, J. Algebra {\bf 357} (2012), 1--19.

\bibitem{voga}
{D. A. Vogan}, A generalized $\tau$-invariant for the primitive 
spectrum of a semisimple Lie algebra. Math. Ann. {\bf 242} (1979), 209--224.

\bibitem{voga1}
{D. A. Vogan}, Ordering of the primitive spectrum of a semisimple
Lie algebra, Math. Ann. {\bf 248} (1980), 195--203.

\end{thebibliography}
\end{document}